\newtheorem{theorem}{Theorem}[section]
\newtheorem{definition}[theorem]{Definition}
\newtheorem{proposition}[theorem]{Proposition}
\newtheorem{question}[theorem]{Question}
\newtheorem{example}[theorem]{Example}
\DeclareMathOperator\tr{tr}
\begin{document}

\title{{Angular Momenta of Relative Equilibrium Motions and Real Moment Map Geometry}}

\author
{Gert Heckman and Lei Zhao}

\date{\today}

\maketitle

\begin{abstract}
In  \cite{Chenciner--Jiminez Perez 2011}, Chenciner and Jim\'enez-P\'erez showed 
that the range of the spectra of the angular momenta of all the rigid motions 
of a fixed central configuration in a general Euclidean space form 
a convex polytope. In this note we explain how this result follows from a 
general ``real'' convexity theorem of O'Shea and Sjamaar in symplectic geometry
\cite{O'Shea--Sjamaar 2000}. Finally, we provide a representation-theoretic description of the 
pushforward of the normalized measure under the real moment map for Riemannian symmetric pairs.
\end{abstract}

\section{Introduction}

An $n$-body configuration $x=(x_1,\cdots,x_n)$ in a Euclidean space $E$ with
masses $m_1,\cdots,m_n>0$ moving in a Newtonian force field $F=\nabla U (x)$
with reduced center of mass $\sum m_kx_k/\sum m_k=0$ is called 
\emph{balanced} with factor $\Lambda$ if
\[ \nabla U(x)=-\Lambda xm \]
for $\Lambda:E\rightarrow E$ a symmetric linear operator on $E$ and 
$m=\mathrm{diag}(m_1,\cdots,m_n)$ the mass matrix 
\cite{Albouy--Chenciner 1998}. This is an algebraic equation with presumably 
an abundance of solutions for large $n$.
It is clear that for $\mu>0$ and $k\in\mathrm{SO}(E)$ the similar configuration 
$\mu kx$ is again balanced with factor $\Lambda/\mu^3$.

Let $x$ be such a balanced configuration in $E$ with factor $\Lambda$.
If $Z:E\rightarrow E$ is a skew symmetric linear operator and satisfies $Z^2=-\Lambda$, then the rigid motion 
$t\mapsto z(t)=\exp(tZ)x$ is a solution of Newton's equation 
$$\ddot{z}m=\nabla U(z).$$
Chenciner and Jim\'enez-P\'erez have shown that
the range of the  spectra of the angular momenta of all such rigid 
motions is a convex polytope \cite{Chenciner--Jiminez Perez 2011}, {which is subsequently 
used by Chenciner in the analysis of bifurcation of relative equilibrium motions of the n-body problem \cite{Chenciner 2014}. }

In this note, we will show that this result is just an immediate consequence
of a convexity theorem of O'Shea and Sjamaar in real moment map 
geometry \cite{O'Shea--Sjamaar 2000}, which will be reviewed in particular in the setting of a pair of real reductive Lie algebras. We have made an effort to to write a
pedestrian exposition. For this reason, we have 
restricted ourselves to the case of \emph{central} configurations, 
for which $\Lambda=\lambda$ is just a scalar operator. 
Indeed, the analysis of the spectra range of the angular momentum of a balanced configuration
breaks down to this case, as has been explained in \cite{Chenciner 2010}.

Finally, one may naturally ask about the density of complex structures corresponding to the same spectrum in the range of the spectra of the angular momenta of rigid motions. An explicit description of this density requires more involved work and will be a question for future research. Nevertheless, motivated by this, and in line with O'Shea-Sjamaar's study of the real moment map, we shall give a description of the pushforward of the normalized invariant measure by the real moment map for Riemannian symmetric pairs. 
This provides a real version of the push-forward of the Liouville measure by the moment map \cite{Duistermaat--Heckman 1982} in this special case. 

\section{The $n$-body problem in Euclidean space of arbitrary finite dimension}

The Newtonian $n$-body problem in a finite dimensional Euclidean space $E$ with inner product
$(\cdot,\cdot)$ is the study of the dynamics of $n$ point particles with positions $x_k\in E$ 
and masses $m_k>0$, with time evolution according to Newton's laws of motion
\[ m_k\ddot{x}_k=\sum_{j\neq k}m_jm_k(x_j-x_k)/|x_j-x_k|^3 \]
for $k=1,\cdots,n$. 
A configuration $x=(x_1,\cdots,x_n)$ in $E^n$ is a row vector with entries vectors in $E$. 
Its dual configuration $x^{\ast}$ on $E^n$ is then a column vector with the corresponding dual vectors on $E$ as entries.
Here any vector in $E$ gives rise to a dual vector on $E$ by taking the inner product with that vector.
For example, with this notation $x^{\ast}x$ is the $n\times n$ Gram matrix of the position configuration,
while $xmx^{\ast}$ is the symmetric linear operator on $E$ sending $v$ to $\sum_km_k(x_k,v)x_k$.
Here $m=\mathrm{diag}(m_1,\cdots,m_n)$ is the mass matrix. 

The negative of the potential energy (which is also called the \emph{force function} by Lagrange)
\[ U(x)=\sum_{j<k}m_jm_k/|x_j-x_k| \]
is a solution of the equations
\[ \nabla_kU(x)=\sum_{j\neq k}m_jm_k(x_j-x_k)/|x_j-x_k|^3 \]
with $\nabla_k$ the gradient with respect to the vector $x_k\in E$.
If we denote $\nabla U(x)=(\nabla_1U(x),\cdots,\nabla_nU(x))\in E^n$, then the equations
of motion can be written in the form
\[ \dot{x}=y,\dot{y}m=\nabla U(x) \]
as a first order system. We denote $K(y)=\tr(y^{\ast}ym)/2$ for the kinetic energy. 
The total energy is thus defined by $H(x,y)=K(y)-U(x)$, and is a conserved quantity:
Indeed, we have $\dot{H}=\tr(y^{\ast}\dot{y}m)-\tr(\dot{x}^{\ast}\nabla U(x))=0$.

The total linear momentum $p=\sum m_ky_k\in E$ is also conserved, which in turn implies that the center of mass
$c=\sum m_kx_k/\sum m_k\in E$ has uniform rectilinear motion. By the center of mass reduction
we may assume that $c=p=0$, \emph{which will be done from now on}.

For the position-velocity pair $(x,y)\in E^n\times E^n$, the total angular momentum is defined by
\[ L=ymx^{\ast}-xmy^{\ast}, \]
which is a skew symmetric linear operator on $E$.
Since 
$$\dot{L}=\dot{y}mx^{\ast}-xm\dot{y}^{\ast}=(\nabla U(x))x^{\ast}-x(\nabla U(x))^{\ast}$$
is the linear operator on $E$ sending $v$ to
\[ \sum_{j\neq k}m_jm_k(x_k,v)\frac{x_j-x_k}{|x_j-x_k|^3}-\sum_{j\neq k}m_jm_k\frac{(x_j,v)-(x_k,v)}{|x_j-x_k|^3}x_k=0 \]
we conclude that $L$ is conserved. The conservation of total linear momentum and of total angular momentum
is a consequence of the Euclidean motion group of $E$ being symmetry group of the equations of motion, in accordance
with the Noether theorem. 

For $n\geq3$, the system is non-integrable in the sense that there are no other independent integrals of motion than the above, a result for 
algebraic integrals due to Bruns in 1887 \cite{Bruns 1887} (substantially completed and generalized in \cite{Julliard Tosel 2000}) and for analytic integrals due to Poincar\'{e} 
in 1890 \cite{Siegel--Moser 1971}. This work by Poincar\'{e} on the (restricted)
3-body problem
reveals the great complexity of the general motion in case $n\geq3$ \cite{Poincare MNMC}.

For $n=2$ the relative position $z=x_1-x_2 \in E$ is a solution of the Kepler problem 
\[ \mu\ddot{z}=-\kappa z/|z|^3\;\;\Leftrightarrow\;\;\ddot{z}=-\lambda z/|z|^3 \]
with $\kappa=m_1m_2,\lambda=m_1+m_2,\mu=\kappa/\lambda>0$. For $H=\mu|\dot{z}|^2/2-\kappa/|z|<0$ the motion is
bounded inside the region $|z|\leq-\kappa/H$, and is either collinear or the point $z$ moves in the Euclidean 
plane $P$ spanned by $z$ and $v=\dot{z}$ along an ellipse with a focus at the origin, according to the area law.
Let $i$ be a complex structure on $P$ compatible with the Euclidean structure, which means that 
$i:P\rightarrow P$ is a skew symmetric linear operator with $i^2=-1$. In polar coordinates $(r, \theta)$, the complex variable
\[ z=re^{i\theta} \]
is a solution of the Kepler problem if and only if $(r,\theta)$ is a solution of
\[ \ddot{r}-r\dot{\theta}^2=-\lambda/r^2\;,\;r\ddot{\theta}+2\dot{r}\dot{\theta}=0\;. \]
For $\dot{\theta}=0$ we get the one dimensional Kepler problem $\ddot{r}=-\lambda/r^2$, 
which corresponds to homothetic motion of $z$ in $E$. For $\dot{r}=0$ we find
$\dot{\theta}^2=\lambda/r^3$, which corresponds to rigid uniform circular motion with 
angular velocity $\omega=\sqrt{\lambda/r^3}$.  

For special initial configurations $x\in E^n$, there exists initial configurations of velocities $y \in E^{n}$ such that the above-mentioned Kepler orbits can be lifted to
exact solutions of the $n$-body problem in $E$. These are the so called central configurations 
and give rise to homographic motions. They generalize the collinear $3$-body configurations
of Euler from 1767 \cite{Euler 1767} and the planar equitriangular $3$-body configurations of Lagrange 
from 1772 \cite{Lagrange 1772}. Later examples were found for $n=4$ by Lehmann-Filk\'{e}s in 1891 
\cite{Lehmann 1891}, and Moulton in \cite{Moulton 1910}, and the abundance of planar central configurations for large $n$ was indicated 
by Dziobek, who also started to use the term ``central figure for such a configuration" in 1899 \cite{Dziobek 1899}. 

Planar and spatial central configurations became a renown subject in celestial mechanics, 
notably after the standard text book of Wintner from 1941 \cite{Wintner 1941} and a crucial paper 
by Smale from 1970 \cite{Smale 1970}. The question of linear stability for some planar central 
configurations was undertaken by Moeckel in the eighties and nineties, generalizing the Gascheau 
stability condition from 1843 for the Lagrange equilateral triangle configuration 
\cite{Gascheau 1843},\cite{Routh 1875},\cite{Moeckel 1994},\cite{Moeckel 1997}.
Central configurations in a Euclidean space $E$ of arbitrary finite dimension 
were considered by Albouy and Chenciner in 1998 \cite{Albouy--Chenciner 1998}.  
We mention that it is not yet known to us what all central configurations are for $n=4$ 
for arbitrary choice of masses, and even the finiteness problem of their number has not been 
completely settled for $n=5$ (for generic choice of masses, this has been proven by Albouy 
and Kaloshin in \cite{Albouy--Kaloshin 2012}), and is yet largely open for $n \ge 6$.
Lecture notes from 2014 by Richard Moeckel on central configurations in a Euclidean space 
of arbitrary finite dimension give a nice overview with many more details (also on the history 
of the subject), and can be found on his website \cite{Moeckel 2014}. 

\section{Central configurations}

We now explain the concept of central configurations in $E^n$ and 
their associated homothetic, rigid and homographic motions of the Newtonian 
$n$-body problem in $E$.

\begin{definition}
For given masses, an $n$-body configuration $x\in E^n$ is called central with constant $\lambda$ if 
\[ \nabla U(x)=-\lambda xm \]
for some scalar $\lambda\in\mathbb{R}$. 
\end{definition}

Since $U(x)$ is homogeneous of degree $-1$,  we have 
\[ \tr(x^{\ast}\nabla U(x))=\mathcal{E}U(x)=-U(x) \] 
with $\mathcal{E}=\sum_{k} (x_{k}, \cdot) \nabla_{k} U(x)$ the Euler vector field on $E^n$, and therefore 
$$\lambda=U(x)/\tr(x^{\ast}xm)>0.$$ Note that central configurations are just the 
stationary points of the function $U(x)$ under the constraint 
$\tr(x^{\ast}xm)/2=1$. Clearly, if $x\in E^n$ is central with constant $\lambda$, 
then for all scalars $\mu>0$ and all proper rigidities $k\in\mathrm{SO}(E)$, 
the configuration $\mu kx\in E^n$ is again central with constant $\lambda/\mu^3$ .

\begin{proposition}
If $x\in E^n$ is a central configuration with constant $\lambda$ and $r(t)$ is a 
solution of the one dimensional Kepler problem $\ddot{r}=-\lambda/r^2$, then
$z(t)=r(t)x$ is a homothetic motion of the $n$-body problem. Conversely, any
homothetic solution $z(t)=r(t)x$ of the $n$-body problem can be expressed in this way for some 
central configuration $x\in E^n$.
\end{proposition}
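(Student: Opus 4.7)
The plan is to exploit the homogeneity of the force function $U$: since $U$ is homogeneous of degree $-1$ in the entries of $x$, its gradient $\nabla U$ is homogeneous of degree $-2$, i.e. $\nabla U(rx)=r^{-2}\nabla U(x)$ for any scalar $r>0$. Both directions of the equivalence follow by directly substituting the ansatz $z(t)=r(t)x$ into Newton's equation $\ddot{z}m=\nabla U(z)$ and comparing the result with the central configuration equation and the one-dimensional Kepler equation.

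For the forward direction, I would compute $\ddot{z}=\ddot{r}(t)x$, so that $\ddot{z}m=\ddot{r}(t)xm$; using $\ddot{r}=-\lambda/r^{2}$ this equals $-\lambda r^{-2}xm$. On the other hand, by the homogeneity of degree $-2$ of $\nabla U$ and the central configuration relation $\nabla U(x)=-\lambda xm$, one obtains $\nabla U(z)=\nabla U(rx)=r^{-2}\nabla U(x)=-\lambda r^{-2}xm$. The two expressions coincide, so $z(t)=r(t)x$ is indeed a solution of the $n$-body problem.

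For the converse, assume $z(t)=r(t)x$ is a solution. The same two computations now force the identity
\[ \ddot{r}(t)\,xm = r(t)^{-2}\nabla U(x), \]
so at every time $t$ the vector $\nabla U(x)$ is a scalar multiple of $xm$, and the proportionality factor $r(t)^{2}\ddot{r}(t)$ must be independent of $t$ (as the left-hand side is time-independent once we cancel $\ddot r(t)$ against it, assuming $xm\neq 0$). Denoting this constant by $-\lambda$, one reads off simultaneously that $x$ is central with constant $\lambda$ and that $r(t)$ satisfies $\ddot{r}=-\lambda/r^{2}$.

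The only mildly delicate point is to ensure $xm\neq 0$ so that the scalar constant $\lambda$ is well defined in the converse; this is automatic since the center of mass has been reduced to zero and a nontrivial central configuration cannot have all masses collapsed at the origin. Everything else is a one-line computation combined with the homogeneity identity, so I expect no substantial obstacle.
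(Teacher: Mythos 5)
Your proposal is correct and follows essentially the same route as the paper: both directions are obtained by substituting $z(t)=r(t)x$ into $\ddot{z}m=\nabla U(z)$ and using the degree $-2$ homogeneity of $\nabla U$, with the converse reading off that $r^2\ddot{r}$ must be a constant $-\lambda$ since $r^2\ddot{r}\,xm=\nabla U(x)$ is time-independent. Your explicit remark that $xm\neq 0$ is needed to extract the constant is a point the paper leaves implicit, but it is immediate since $m$ is positive definite and a genuine configuration has $x\neq 0$.
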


\begin{proof} 
Indeed, if $x\in E^n$ is a central configuration with constant $\lambda$ 
and the real function $r(t)$ is a solution of $\ddot{r}=-\lambda/r^2$,  
then the motion $z(t)=r(t)x$ satisfies
\[ \ddot{z}m=\ddot{r}xm=-\lambda xm/r^2=\nabla U(x)/r^2=\nabla U(z), \]
since $\nabla U(x)$ is homogeneous in $x$ of degree $-2$.

Conversely, suppose $z(t)=r(t)x$ is
a solution of the $n$-body problem for some real function $r(t)$. By substitution into the equation of motion 
$\ddot{z}m=\nabla U(z)$, we obtain $r^2\ddot{r}xm=\nabla U(x)$. 
Hence $r^2\ddot{r}=-\lambda$ for some constant $\lambda\in\mathbb{R}$, 
and so $\nabla U(x)=-\lambda xm$, thus $x$ is a central configuration with constant $\lambda$. 
\end{proof}

We recall that a compatible complex structure on $E$ is a skew symmetric linear operator 
$J:E\rightarrow E$ with $J^2=-1$. A neccesary and sufficient condition for such
$J$ to exist is that $E$ has even dimension.

\begin{proposition}
Suppose $x\in E^n$ is a central configuration in $E$ with constant 
$\lambda=\omega^2>0$.
Any compatible complex structure $J$ on $E$ gives rise to a rigid motion 
$t\mapsto z(t)=\exp(t\omega J)x$ of the $n$-body problem.
Conversely, if $E$ is spanned by $x$, then any rigid motion
solution of $\ddot{z}m=\nabla U(z)$ is of this form.
\end{proposition}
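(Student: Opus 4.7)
The plan is to treat the two directions separately, using in each case the rotational equivariance of $\nabla U$ together with the identity $J^{2}=-1$.

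For the forward implication, I would set $k(t)=\exp(t\omega J)$ and note that, because $J$ is skew symmetric, $k(t)\in\SO(E)$ for all $t$, and because $J^{2}=-1$ one has $\ddot{k}(t)=\omega^{2}J^{2}k(t)=-\lambda k(t)$. Consequently $\ddot{z}(t)=\ddot{k}(t)x=-\lambda k(t)x=-\lambda z(t)$, so $\ddot z(t)m=-\lambda z(t)m$. On the other hand, since $U$ depends only on the mutual distances $|x_{j}-x_{k}|$, it is invariant under the diagonal $\SO(E)$-action, whence $\nabla U$ is equivariant: $\nabla U(k(t)x)=k(t)\nabla U(x)$. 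Combining this with the central configuration relation $\nabla U(x)=-\lambda xm$ gives $\nabla U(z(t))=-\lambda k(t)xm=-\lambda z(t)m=\ddot z(t)m$, which is Newton's equation.

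For the converse, I would write a general rigid solution as $z(t)=k(t)x$ with $k(t)\in\SO(E)$, after if necessary replacing $x$ by $k(0)x$ so that $k(0)=I$. Substituting into Newton's equation and again invoking the equivariance of $\nabla U$ and the central configuration relation yields $\ddot{k}(t)xm=k(t)\nabla U(x)=-\lambda k(t)xm$, that is, $(\ddot{k}(t)+\lambda k(t))x=0$ in $E^{n}$. Since the entries $x_{1},\dots,x_{n}$ span $E$ by hypothesis, this forces the linear operator identity $\ddot{k}(t)+\lambda k(t)=0$ on $E$. Together with $k(0)=I$ this integrates to $k(t)=\cos(\omega t)I+\sin(\omega t)B$ for a single linear operator $B=\dot{k}(0)/\omega$ on $E$.

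It then remains to show that $B$ is a compatible complex structure. For this, I would expand the orthogonality relation $k(t)^{\ast}k(t)=I$ using the above expression for $k(t)$; since $1,\sin(\omega t)\cos(\omega t),\sin^{2}(\omega t)$ are linearly independent functions of $t$, one can read off $B+B^{\ast}=0$ and $B^{\ast}B=I$ separately. The first says that $B$ is skew symmetric, and combined with the second it gives $B^{2}=-I$, so $J:=B$ is indeed a compatible complex structure and $k(t)=\exp(t\omega J)$, as claimed. The main (mild) obstacle is the step where the ODE is promoted from $E^{n}$ to $E$: this is exactly the place where the spanning hypothesis on $x$ is essential, and without it one could only recover the action of $k(t)$ on the subspace spanned by $x$, leaving $J$ undetermined on its orthogonal complement.
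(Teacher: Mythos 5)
Your proof is correct, and your forward direction is essentially the paper's: the paper simply observes that $z(t)=\exp(t\omega J)x$ is again central with the same constant $\lambda=\omega^{2}$ (rotations preserve centrality), which is your $\mathrm{SO}(E)$-equivariance argument in compressed form. Your converse, however, takes a genuinely different and in fact stronger route. In the paper, a rigid motion is \emph{by definition} a one-parameter rotation group applied to $x$, namely $z(t)=\exp(tZ)x$ with $Z$ skew symmetric; the paper's proof then reduces Newton's equation, via equivariance, to the pointwise condition $Z^{2}x=-\lambda x$, and the spanning hypothesis promotes this at once to the operator identity $Z^{2}=-\lambda$ on $E$, whence $Z=\omega J$ with $J$ a compatible complex structure. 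You instead allow an arbitrary smooth curve $k(t)\in\mathrm{SO}(E)$ with $z(t)=k(t)x$, use the spanning hypothesis to derive the operator ODE $\ddot{k}+\lambda k=0$, integrate it to $k(t)=\cos(\omega t)I+\sin(\omega t)B$, and then extract $B+B^{\ast}=0$ and $B^{\ast}B=I$ (hence $B^{2}=-I$) from the orthogonality constraint $k(t)^{\ast}k(t)=I$ via linear independence of $\sin(\omega t)\cos(\omega t)$ and $\sin^{2}(\omega t)$; all of these steps check out. What your route buys is a stronger conclusion: any rigid solution in the broader sense of arbitrary time-dependent rotations of a fixed configuration is \emph{automatically} a uniform one-parameter rotation $\exp(t\omega J)x$ --- a fact the paper builds into its definition rather than proves. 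The cost is length: granting the paper's definition, its two-line reduction to $Z^{2}x=-\lambda x$ suffices. Both arguments invoke the spanning hypothesis at exactly the same point, to pass from an identity on the configuration $x$ to an identity of operators on $E$, and you correctly flag that without it $J$ would be undetermined on the orthogonal complement of the span. One minor point worth making explicit in your normalization $k(0)=I$: replacing $x$ by $k(0)x$ is harmless because rotations preserve both centrality with the same constant and the spanning property.
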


\begin{proof}
Indeed, we have $\ddot{z}m=-\omega^2zm=\nabla U(z)$ since $z$ is central with 
scalar $\lambda=\omega^2$.

Conversely, if $Z:E\rightarrow E$ is a skew symmetric operator, then the
rigid motion $z(t)=\exp(tZ)x$ of the central configuration $x$ with scalar
$\lambda=\omega^2$ is a solution of $\ddot{z}m=\nabla U(z)$ if and only if 
$Z^2x=-\lambda x$. Since by assumption $E$ is spanned by $x$, we arrive
at $Z=\omega J$ with $J$ a compatible complex structure on $E$.
\end{proof}

Homothetic and rigid motions of a central configuration are both special cases
of the more general homographic motions.

\begin{theorem}
Suppose $t\mapsto(r,\theta)$ is a solution of the planar Kepler problem 
\[ \ddot{z}=-\lambda z/|z|^3\;,\;z=re^{i\theta}\;\Leftrightarrow\;
\ddot{r}-r\dot{\theta}^2=-\lambda/r^2\;,\;r\ddot{\theta}+2\dot{r}\dot{\theta}=0 \] 
in polar coordinates. If $x\in E^n$ is a central configuration with constant 
$\lambda$ and $J$ is a compatible complex structure on $E$, then  
\[ t\mapsto z(t)=r(t)\exp(\theta(t)J)x \]
is a homographic motion of the $n$-body problem.
\end{theorem}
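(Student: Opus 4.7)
The plan is a direct substitution: show that $z(t)=r(t)\exp(\theta(t)J)x$ satisfies Newton's equation $\ddot z\,m=\nabla U(z)$ by differentiating twice and identifying the two sides using (i) the polar form of the Kepler equations and (ii) the equivariance and homogeneity of $\nabla U$.

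First I would compute $\ddot z$. Writing $R(\theta)=\exp(\theta J)$, so that $\dot R=\dot\theta\,JR$ and $\ddot R=(\ddot\theta\,J-\dot\theta^{2})R$ since $J^{2}=-1$, a routine differentiation of $z=rR(\theta)x$ gives
\[
\ddot z \;=\;(\ddot r-r\dot\theta^{2})\,R(\theta)x\;+\;(2\dot r\dot\theta+r\ddot\theta)\,JR(\theta)x.
\]
The second, ``$J$-component'' summand vanishes identically because $(r,\theta)$ solves the angular Kepler equation $r\ddot\theta+2\dot r\dot\theta=0$, while the first, ``radial'' summand simplifies via $\ddot r-r\dot\theta^{2}=-\lambda/r^{2}$ to $-\lambda r^{-2}R(\theta)x$.

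Next I would analyze the right hand side $\nabla U(z)$. The key observations are: (a) $\nabla U$ is homogeneous of degree $-2$ in the configuration, as used in Proposition~2 of the homothetic case; and (b) $\nabla U$ is equivariant under the diagonal $\mathrm{SO}(E)$-action, because each term $m_{j}m_{k}(x_{j}-x_{k})/|x_{j}-x_{k}|^{3}$ depends only on Euclidean distances and rotates covariantly. Since $R(\theta)\in \mathrm{SO}(E)$ commutes with the (scalar-on-$E$) mass multiplication on the right, these two facts give
\[
\nabla U(z)\;=\;\nabla U\!\bigl(rR(\theta)x\bigr)\;=\;r^{-2}R(\theta)\nabla U(x)\;=\;-\lambda r^{-2}R(\theta)\,xm,
\]
where the last step uses that $x$ is central with constant $\lambda$. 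Multiplying the expression for $\ddot z$ above on the right by $m$ matches this exactly, establishing $\ddot z\,m=\nabla U(z)$.

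There is no real obstacle; the only points that require a moment of care are the commutation of the $\mathrm{SO}(E)$-action (acting on the vector side of $E^{n}$) with right multiplication by the mass matrix $m$ (acting on the index side), and the invocation of both homogeneity and rotational equivariance of $\nabla U$ simultaneously. Everything else is a one-line consequence of the polar Kepler identities $\ddot r-r\dot\theta^{2}=-\lambda/r^{2}$ and $r\ddot\theta+2\dot r\dot\theta=0$, which is precisely why the homographic ansatz decouples into a planar Kepler problem in $(r,\theta)$ and the central configuration equation for $x$.
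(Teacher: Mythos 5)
Your proof is correct and follows essentially the same route as the paper: direct substitution of the ansatz, the identity $\ddot z=(\ddot r-r\dot\theta^{2})Rx+(2\dot r\dot\theta+r\ddot\theta)JRx$ collapsed by the polar Kepler equations, and then the identification $\nabla U(z)=-\lambda r^{-3}zm$. The paper compresses your explicit equivariance-plus-homogeneity step into the earlier observation that $\mu kx$ is central with constant $\lambda/\mu^{3}$ for $\mu>0$ and $k\in\mathrm{SO}(E)$ (applied with $\mu=r$, $k=\exp(\theta J)$), which rests on exactly the two facts you invoke.
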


\begin{proof}
This is just the standard derivation of the equations of motion for the Kepler problem
in polar coordinates. Indeed, let $x\in E^n$ be a fixed central configuration with 
constant $\lambda$, thus $\nabla U(x)=-\lambda xm$ holds by definition. 
We have to check that
\[ z=r(t) e^{\theta(t) J}x \]
is a solution of the equations of motion $\ddot{z}m=\nabla U(z)$ for the $n$-body problem. 
By differentiation, we have
\[ \ddot{z}=r^{-1}\{(\ddot{r}-r\dot{\theta}^2)+(r\ddot{\theta}+2\dot{r}\dot{\theta})J\}z, \]
and by assumption, we get $\ddot{z}=-\lambda r^{-3}z$. Since $z$ is central with
constant $\lambda / r^3$, we find $\nabla U(z)=-\lambda r^{-3}zm$, 
and hence $\ddot{z}m=\nabla U(z)$ is satisfied.
\end{proof}

Note that the term ``homographic'' in the terminology ``homographic motion'', though commonly used by celestial mechanists, should not be confused with the term ``homography'' in the geometric sense, which is synonymous to projective transformations. Under a homographic motion with negative total energy
\[ (\dot{r}+r\dot{\theta})^2/2-\lambda/r<0, \] 
each point particle $x_k\in E$ traverses a Kepler ellipse in the plane spanned 
by $\{x_k,Jx_k\}$ with one focus at the origin according to the area law, 
and all $n$ point particles traverse similar ellipses. 

We end this section by showing that central configurations exist in high
dimensions in abundance. Just take a (heavy) particle with mass $M$ at the origin 
$x_0=0$ and a cloud of $n$ (light) particles at positions $x_1,\cdots,x_n$ with 
equal masses $m$ with $\sum x_i=0$ and with a sufficient symmetry. 

\begin{theorem}
If $G$ be a finite irreducible subgroup of the orthogonal group $\mathrm{O}(E)$, 
such that $G$ acts transitively on the cloud $x_1,\cdots,x_n$ and for each
$i=1,\cdots,n$ the fixed point hyperplane in $E$ of the stabilizer group
$G_i$ of $x_i$ in $G$ is equal to the line $\mathbb{R}x_i$, then the
configuration $x=(x_0,x_1,\cdots,x_n)$ with masses $(M,m,\cdots,m)$ is
central.
\end{theorem}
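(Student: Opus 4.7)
My plan is to verify the defining identity $\nabla_k U(x)=-\lambda m_k x_k$ of a central configuration separately at the heavy particle $k=0$ and at each light particle $k=i\geq 1$. In both cases, the idea is to use the $G$-symmetry to force $\nabla_k U(x)$ into the line $\mathbb{R}x_k$, and to use transitivity on the cloud to force the proportionality constant to be the same at all $k$.

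First, two preliminary remarks. Since $G\subset \mathrm{O}(E)$ acts transitively on $\{x_1,\dots,x_n\}$, the common value $r:=|x_i|$ is independent of $i\geq 1$. The vector $\sum_{i=1}^n x_i\in E$ is $G$-invariant; since $G$ acts irreducibly and nontrivially on $E$ the space of $G$-invariants vanishes, so $\sum_{i=1}^n x_i=0$. In particular the center of mass sits at the origin as required, and
\[ \nabla_0 U(x)=\sum_{i=1}^n \frac{Mm\,(x_i-x_0)}{|x_i|^3}=\frac{Mm}{r^3}\sum_{i=1}^n x_i=0, \]
which equals $-\lambda M x_0$ trivially for any $\lambda$, settling the case $k=0$. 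Next, fix $i\geq 1$ and write
\[ \nabla_i U(x)=\frac{Mm(x_0-x_i)}{r^3}+\sum_{j\neq 0,i}\frac{m^2(x_j-x_i)}{|x_j-x_i|^3}. \]
Any $g\in G_i$ fixes both $x_0$ and $x_i$ and permutes $\{x_1,\dots,x_n\}\setminus\{x_i\}$; since $g$ is an isometry with $gx_i=x_i$, each summand is mapped to another summand with the same distance denominator, so the whole expression is $G_i$-fixed. By hypothesis $E^{G_i}=\mathbb{R}x_i$, hence $\nabla_i U(x)=-\lambda_i m\, x_i$ for some scalar $\lambda_i$. To see that $\lambda_i$ is independent of $i$, pick $g\in G$ with $gx_i=x_j$: applying $g$ termwise to the sum above and reindexing the permutation of the cloud gives $g\,\nabla_i U(x)=\nabla_j U(x)$, hence $-\lambda_i m\,x_j=-\lambda_j m\,x_j$, so $\lambda_i=\lambda_j=:\lambda$. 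Together with the $k=0$ case this yields $\nabla U(x)=-\lambda xm$.

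The only step requiring genuine care is the equivariance $g\,\nabla_i U(x)=\nabla_j U(x)$ when $gx_i=x_j$: one must note that $g$ simultaneously acts as an isometry of $E$ and as a permutation of the labels $\{1,\dots,n\}$, so that the sum over $k\neq i$ is reindexed exactly to the sum over $k\neq j$ with matching distance denominators. Apart from this bookkeeping the argument is elementary, and it reduces the existence of central configurations in arbitrary dimensions to the purely representation-theoretic task of producing finite irreducible subgroups $G<\mathrm{O}(E)$ admitting an orbit with the prescribed stabilizer fixed-line property (reflection groups acting through a generic point on a mirror give many such examples).
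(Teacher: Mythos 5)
Your proof is correct and follows essentially the same route as the paper's: $G_i$-symmetry forces the total force on $x_i$ into the fixed line $\mathbb{R}x_i$, transitivity of $G$ on the cloud equalizes the constants $\lambda_i$, and the condition at $x_0=0$ holds trivially. The only addition is that you derive $\sum_i x_i=0$ (and hence $\nabla_0 U(x)=0$) from irreducibility of $G$, whereas the paper simply assumes $\sum_i x_i=0$ in its setup of the configuration --- a small but welcome sharpening, not a different argument.
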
 

\begin{proof} 
The total force on the particle $x_i$ is the sum of the forces
expelled from the particles $x_j$ for $j\neq i$. Hence by symmetry
this total force on $x_i$ is fixed by $G_i$, and therefore equal to
$-\lambda_ix_i$ for some scalar $\lambda_i$. By symmetry we have
$\lambda_i=\lambda_j=\lambda$ for all $i,j\geq1,i\neq j$. We can take 
$\lambda_0=\lambda$ as well, and hence we find a central configuration.
\end{proof}

An example of such a configuration is obtained by taking for the
cloud the vertices of a regular polytope in the sense of Schl\"{a}fli
\cite{Coxeter 1973}. More generally, for any finite irreducible 
reflection group, one can take for the cloud the orbit of a nonzero
vector on an extremal ray of a positive Weyl chamber. For example,
in dimension $8$ one can obtain such a central configuration with a 
cloud of $483840$ particles with Weyl group symmetry of type $\mathrm{E}_8$.
But there are plenty of other examples, for example the minimal norm 4 vectors in 
the Leech lattice gives such a central configuration in dimension $24$
with a cloud of $196560$ particles (see \cite{Conway--Sloane 1988} for explanations of these lattices).

The planar central configurations with a regular $n$-gon for the cloud was deeply studied by Maxwell 
\cite{Maxwell 1859}, and more recently by Hall and Moeckel \cite{Hall 1993},
\cite{Moeckel 1994}. Their rigid motion is linearly stable for $n\geq7$ in case 
$m/M$ is sufficiently small (the larger $n$, the smaller $m/M$ should be). The 
question of linear stability of these general symmetric central configurations, in case 
$m/M$ is sufficiently small and for dimension at least $4$, is completely open. 
The motivation of Maxwell for this work was to understand the stability
of the rings of Saturn. His essay, published in 1859, was highly appreciated at the time, 
and won him the Adams prize for the year 1856.

\section{The spectra of the angular momenta}

Let $x\in E^n$ be a central configuration with constant $\lambda>0$. By a 
suitable positive scaling we may assume that $\lambda=1$, 
\emph{which will be assumed in this section.} For any compatible complex structure 
$J:E\rightarrow E$, we have discussed the rigid motion $t\mapsto z(t)=\exp(tJ)x$ 
of the Newtonian $n$-body problem. Note that $J^{\ast}=-J=J^{-1}$, so
$J$ is both skew symmetric and orthogonal. The conserved angular momentum 
\[ L:=\dot{z}mz^{\ast}-zm\dot{z}^{\ast}=Jxmx^{\ast}+xmx^{\ast}J \]
is a skew symmetric linear operator on $E$. The compatible complex structure
$J$ turns $E$ into a finite dimensional Hilbert space $(E,J)$ with Hermitian
form whose real part is the Euclidean inner product $(\cdot,\cdot)$. 
Clearly $L$ and $J$ commute, and if we write $X=xmx^{\ast}$ for the so called 
\emph{inertia operator} of the central configuration $x$, then
\[ K:=LJ^{\ast}=X+JXJ^{\ast} \] 
is a nonnegative selfadjoint operator on $(E,J)$. By definition, the \emph{real spectrum} 
of $L$ is the spectrum of $K$, considered as an ordered subset of $\mathbb{R}_+$ 
of cardinality equal to the complex dimension of the Hilbert space $(E,J)$.

What are the possible real spectra  of $L$ when $J$ varies over all the possible 
compatible complex structures on $E$? This question was posed by Chenciner, 
who conjectured it to be a convex polytope \cite{Chenciner 2010}, 
which was subsequently shown by an indirect argument by Chenciner and Jim\'enez-P\'erez 
\cite{Chenciner--Jiminez Perez 2011} by realizing this real spectum range between two Horn-type convex polytopes, and observe that a combinatorial lemma by Fomin--Fulton--Li--Poon \cite{Fomin--Fulton--Li--Poon 2005} affirms the coincidence of these two convex polytopes. 

The curious convexity property of this real spectrum range raised the question of finding for it a direct, conceptual proof, which is a question posed by Chenciner and Leclerc \cite{Chenciner--Leclerc 2014}. To present a direct proof of this convexity property, let us rephrase the question.

Let $j:E\rightarrow E$ be a fixed compatible complex structure.
Any compatible complex structure on $E$ is of the form $J=k^{\ast}jk$
for some $k\in\mathrm{O}(E)$, and therefore
\[ M:=kKk^{\ast}=(kXk^{\ast})+j(kXk^{\ast})j^{\ast} \]
is a nonnegative selfadjoint operator on the fixed Hilbert space $(E,j)$.
Let us write $\mathfrak{s}(E)$ for the space of symmetric operators
on $E$, and write $\mathfrak{s}(E,j)$ for its linear subspace
of selfadjoint operators on $(E,j)$. We consider $\mathfrak{s}(E)$
as Euclidean space with respect to the trace form $(Y,Z)=\tr(YZ)$,
and observe that $\mathrm{O}(E)$ acts on $\mathfrak{s}(E)$ by conjugation
as orthogonal linear transformations. Note that the map
\[ \mathfrak{s}(E)\rightarrow\mathfrak{s}(E,j),\;Y\mapsto(Y+jYj^{\ast})/2 \]
is nothing else but the orthogonal projection of $\mathfrak{s}(E)$ onto 
$\mathfrak{s}(E,j)$. Clearly this map is equivariant for the conjugation action 
of the unitary group $\mathrm{U}(E,j)$. Therefore the question on the range of the 
spectra of the selfadjoint 
operator $K$ on the Hilbert space
$(E,J)$ as $J$ varies over the space of all compatible complex structures on $E$ 
boils down to the determination of the image under the so called 
\emph{real moment map}
\[ \mu:\mathfrak{X}\rightarrow\mathfrak{s}(E,j),\;\mu(Y)=(Y+jYj^{\ast})/2 \] 
for the \emph{real Hamiltonian action} of the unitary group $\mathrm{U}(E,j)$ on 
the connected isospectral class $\mathfrak{X}=\{kXk^{\ast};k\in\mathrm{O}(E)\}$ 
in $\mathfrak{s}(E)$ of the inertia operator $X=xmx^{\ast}$ of the central 
configuration $x$. 

With these settings, the convexity result of Chenciner and Jim\'enez-P\'erez will be
an immediate consequence of a convexity theorem for the real moment polytope of O'Shea and Sjamaar 
\cite{O'Shea--Sjamaar 2000} for real reductive Lie algebras. Their result will be explained in the next 
section.

\section{The convexity theorem}

The real general linear Lie algebra $\mathfrak{gl}(E)$ of a Euclidean 
vector space $E$ has the standard Cartan involution 
$\theta:\mathfrak{gl}(E)\rightarrow\mathfrak{gl}(E)$
given by $\theta(X)=-X^{\ast}$, and the corresponding Cartan decomposition
\[ \mathfrak{gl}(E)=\mathfrak{so}(E)\oplus\mathfrak{s}(E) \]
as sum of $+1$ and $-1$ eigenspaces of $\theta$. The commutator bracket 
turns $\mathfrak{so}(E)$ in a Lie algebra, and $\mathfrak{s}(E)$ in a 
representation space for $\mathfrak{so}(E)$. 
The trace form $(X,Y)=\tr(XY)$ on $\mathfrak{gl}(E)$ is a nondegenerate
symmetric bilinear form, which is negative definite on $\mathfrak{so}(E)$ and 
positive definite on $\mathfrak{s}(E)$. The conjugation representation of 
$\mathrm{O}(E)$ on $\mathfrak{s}(E)$ is an orthogonal representation.

\begin{definition}
A real reductive Lie algebra with Cartan involution is a pair 
$(\mathfrak{g},\theta)$ with Lie subalgebra $\mathfrak{g}<\mathfrak{gl}(E)$ 
that is invariant under the standard Cartan involution $\theta$ of
$\mathfrak{gl}(E)$. By abuse of notation, the restriction of $\theta$ to 
$\mathfrak{g}$ is again denoted by $\theta$, and is called the Cartan 
involution of $\mathfrak{g}$. We have a corresponding Cartan decomposition
\[ \mathfrak{g}=\mathfrak{k}\oplus\mathfrak{s},\;
\mathfrak{k}=\mathfrak{g}\cap\mathfrak{so}(E),\;
\mathfrak{s}=\mathfrak{g}\cap\mathfrak{s}(E) \]
of $\mathfrak{g}$ as sum of $+1$ and $-1$ eigenspaces of $\theta$. 
The restriction of the trace form to $\mathfrak{g}$ is called the trace form
of $\mathfrak{g}$. It is a nondegenerate symmetric bilinear form, which is
negative on $\mathfrak{k}$ and positive on $\mathfrak{s}$. The connected 
Lie subgroup $K<\mathrm{SO}(E)$ with Lie algebra $\mathfrak{k}$ has a  
representation on $\mathfrak{s}$ by conjugation. Finally, we shall assume
that $K<\mathrm{SO}(E)$ is compact, so as to exclude the case of 
quasi-periodic subgroups. The connected Lie subgroup $G<\mathrm{GL}(E)$ 
with Lie algebra $\mathfrak{g}$ is a real reductive Lie group with $K$ 
as a maximal compact subgroup.
\end{definition}

\begin{example}
If $j:E\rightarrow E$ is a fixed complex structure on $E$ then the complex 
general linear Lie algebra $\mathfrak{gl}(E,j)$ gives, by restriction of scalars, 
a real reductive Lie algebra with Cartan involution.
\end{example}

\begin{definition}
A real reductive Lie algebra $(\mathfrak{g},\theta)$ is called complex if there is a 
complex structure $j:\mathfrak{g}\rightarrow\mathfrak{g}$ making $\mathfrak{g}$
into a complex Lie algebra, such that $j\theta=-\theta j$. This means that 
$\theta$ is an antilinear involution of $(\mathfrak{g},j)$. Note that
multiplication by $j$ interchanges $\mathfrak{k}$ and $\mathfrak{s}$.
\end{definition}

The complex general linear Lie algebra $(\mathfrak{gl}(E,j),\theta)$ is a 
natural example of a complex reductive Lie algebra with Cartan involution.

\begin{definition}
A real reductive Lie algebra $(\mathfrak{g},\theta)$ with Cartan decomposition
$\mathfrak{g}=\mathfrak{k}\oplus\mathfrak{s}$ has a natural complexification
$(\mathfrak{g}_c,\theta)$ defined by
\[ \mathfrak{g}_c=\mathfrak{g}+i\mathfrak{g},\;i=\sqrt{-1} \]
with Cartan decomposion
\[ \mathfrak{g}_c=\mathfrak{u}\oplus\mathfrak{p},\;
\mathfrak{u}=\mathfrak{k}+i\mathfrak{s},\;
\mathfrak{p}=\mathfrak{s}+i\mathfrak{k}\; \]
for the natural antilinear Cartan involution $\theta$ on $\mathfrak{g}_c$.
The homogeneous spaces $G/K$ and $U/K$ are dual (in the sense of \'{E}lie
Cartan) Riemannian symmetric spaces of noncompact and compact type respectively. 
Both spaces are different real forms of the complex symmetric space $G_c/K_c$ 
with transversal intersection at the base point $eK$. 
Here $G_c$ is the complex Lie subgroup of $\mathrm{GL} (E_{c})$ with Lie algebra $\mathfrak{g}_{c}$
(with $E_{c}$ the complexification of $E$), and
with $K_c$ the complex Lie subgroup of $G_c$   Lie algebra 
$\mathfrak{k}_c=\mathfrak{k}+i\mathfrak{k}$.
\end{definition}

The following theorem collects the standard structure theory for real reductive 
Lie algebras with Cartan involution \cite{Helgason 1978}.

\begin{theorem}
Let $(\mathfrak{g},\theta)$ be a real reductive Lie algebra with Cartan 
decomposition $\mathfrak{g}=\mathfrak{k}\oplus\mathfrak{s}$.
Any two maximal commutative linear subspaces in $\mathfrak{s}$ are conjugated under 
$K$. If $\mathfrak{a}<\mathfrak{s}$ is a fixed maximal commutative linear subspace,
then the Weyl group $W=N_K(\mathfrak{a})/C_K(\mathfrak{a})$ (normalizer modulo
centralizer of $\mathfrak{a}$ in $K$) acts by conjugation on $\mathfrak{a}$ 
as a finite reflection group. 
Let $\mathfrak{a}_+$ denote the closure of a fixed connected component of
the complement $\mathfrak{a}^{\circ}$ of all the mirrors in $\mathfrak{a}$,
and call it the (closed) positive Weyl chamber. Then $\mathfrak{a}_+$ is a strict
fundamental domain for the action of $W$ on $\mathfrak{a}$, and likewise for the conjugation
action of $K$ on $\mathfrak{s}$.
\end{theorem}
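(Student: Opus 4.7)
The plan is to follow the classical strategy for the structure of symmetric spaces (as in Helgason), splitting the argument into the three assertions of the theorem.

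For the conjugacy of maximal abelian subspaces, I would exploit compactness of $K$ together with the sign behavior of the trace form. Pick \emph{regular} elements $H \in \mathfrak{a}$ and $H' \in \mathfrak{a}'$, meaning their centralizers in $\mathfrak{s}$ coincide with $\mathfrak{a}$ and $\mathfrak{a}'$ respectively; such elements exist as complements of finitely many proper subspaces of lower dimension. Consider
$$
f : K \to \mathbb{R}, \qquad f(k) = (H, \mathrm{Ad}(k) H'),
$$
which attains a maximum at some $k_0 \in K$ by compactness. Differentiating at $k_0$ in the direction of $X \in \mathfrak{k}$ and using the invariance of the trace form yields $([H, \mathrm{Ad}(k_0) H'], X) = 0$ for all $X \in \mathfrak{k}$. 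Since $[\mathfrak{s}, \mathfrak{s}] \subset \mathfrak{k}$ and the trace form is negative definite on $\mathfrak{k}$, this forces $[H, \mathrm{Ad}(k_0) H'] = 0$. Regularity of $H$ then gives $\mathrm{Ad}(k_0) H' \in \mathfrak{a}$, and a dimension count via maximality upgrades this to $\mathrm{Ad}(k_0) \mathfrak{a}' = \mathfrak{a}$.

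For the reflection-group structure of $W = N_K(\mathfrak{a})/C_K(\mathfrak{a})$, finiteness is immediate: $N_K(\mathfrak{a})$ is compact, while the Lie algebras of $N_K(\mathfrak{a})$ and $C_K(\mathfrak{a})$ both coincide with the centralizer of $\mathfrak{a}$ in $\mathfrak{k}$, so $W$ is a compact Lie group of dimension zero. To produce the generating reflections, I would introduce the restricted root space decomposition
$$
\mathfrak{g} = \mathfrak{g}_0 \oplus \bigoplus_{\alpha \in \Sigma} \mathfrak{g}_\alpha, \qquad \mathfrak{g}_\alpha = \{X \in \mathfrak{g} : \mathrm{ad}(H) X = \alpha(H) X \text{ for all } H \in \mathfrak{a}\},
$$
obtained by simultaneously diagonalizing $\mathrm{ad}(\mathfrak{a})$. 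For each $\alpha \in \Sigma$, a nonzero $X_\alpha \in \mathfrak{g}_\alpha$ together with $\theta X_\alpha \in \mathfrak{g}_{-\alpha}$ and $H_\alpha = [X_\alpha, \theta X_\alpha] \in \mathfrak{a}$ yields an $\mathfrak{sl}_2$-triple after normalization, and the compact element $\exp\bigl(\tfrac{\pi}{2}(X_\alpha + \theta X_\alpha)\bigr) \in K$ then normalizes $\mathfrak{a}$ and acts on it as the orthogonal reflection through $\ker \alpha$ with respect to the trace form.

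For the fundamental domain property, that $\mathfrak{a}_+$ is a strict fundamental domain for $W$ on $\mathfrak{a}$ is a standard theorem about finite reflection groups, once one knows that the subgroup $W'$ generated by the $s_\alpha$ above exhausts $W$; this in turn follows because $W'$ permutes the connected components of $\mathfrak{a}^\circ$ transitively by walking across walls, and any element of $W$ stabilizing a chamber is trivial. That any $K$-orbit in $\mathfrak{s}$ meets $\mathfrak{a}_+$ then follows by combining conjugacy of maximal abelian subspaces with the $W$-case. The step I expect to be the main obstacle is the \emph{uniqueness} of the representative: if $H_1, H_2 \in \mathfrak{a}_+$ satisfy $\mathrm{Ad}(k) H_1 = H_2$ for some $k \in K$, one must show $H_1 = H_2$. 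I would dispose of this by reapplying the Step~1 argument inside the centralizer $K_{H_2}$, which is itself maximal compact in a smaller reductive subgroup (the centralizer in $G$), whose restricted root system is a parabolic subsystem of $\Sigma$; this produces an element of $N_K(\mathfrak{a})$ sending $H_1$ to $H_2$, at which point the $W$-case of the fundamental domain forces $H_1 = H_2$.
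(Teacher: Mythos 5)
The paper itself gives no proof of this theorem---it is stated as standard structure theory with a citation to Helgason---so the comparison is with the classical argument you are reconstructing, and most of your outline matches it faithfully: maximizing $k\mapsto(H,\mathrm{Ad}(k)H')$ over the compact group, with criticality forcing $[H,\mathrm{Ad}(k_0)H']=0$ by nondegeneracy of the trace form on $\mathfrak{k}$, is exactly the textbook proof of conjugacy, and the $\mathfrak{sl}_2$-construction of the reflections $s_\alpha$ from restricted root vectors is the standard one. The genuine gap is the clause ``any element of $W$ stabilizing a chamber is trivial.'' For the reflection subgroup $W'$ generated by the $s_\alpha$ this is simple transitivity of the chamber action and is indeed standard; but you invoke it for the full geometric Weyl group $W=N_K(\mathfrak{a})/C_K(\mathfrak{a})$, where it is precisely equivalent to the equality $W=W'$ that you are trying to establish, so as written the argument is circular. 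Nor can it be dismissed as abstract reflection-group theory: $W$ preserves the restricted root system $\Sigma$ (since $\mathrm{Ad}(k)$ permutes the restricted root spaces), but the stabilizer of a chamber inside the full group $\Aut(\Sigma)$ of orthogonal symmetries of $\Sigma$ is the group of diagram automorphisms, which is nontrivial in types $\mathrm{A}_n$, $\mathrm{D}_n$, $\mathrm{E}_6$. So genuinely Lie-theoretic input, beyond the combinatorics of $\Sigma$, is needed to rule out nontrivial chamber-preserving elements of $W$.

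The standard repair runs as follows: if $w\in W$ maps the open chamber $C$ to itself, average over the cyclic group $\langle w\rangle$ to obtain a regular $H_0\in C$ with $wH_0=H_0$; the crux is then to show $Z_K(H_0)=Z_K(\mathfrak{a})$. The infinitesimal computation only yields $\mathfrak{z}_{\mathfrak{k}}(H_0)=\mathfrak{z}_{\mathfrak{k}}(\mathfrak{a})$, and the difficulty sits in the component group of $Z_K(H_0)$. One concludes by observing that for $k\in Z_K(H_0)$ the operator $\mathrm{Ad}(k)$ commutes with $\mathrm{ad}(H_0)$, hence preserves $\mathfrak{n}=\bigoplus_{\alpha(H_0)>0}\mathfrak{g}_\alpha$ and normalizes the minimal parabolic subalgebra $\mathfrak{m}\oplus\mathfrak{a}\oplus\mathfrak{n}$; self-normalization of parabolic subgroups together with uniqueness of the Iwasawa decomposition ($K\cap MAN=M$) then forces $k\in M=Z_K(\mathfrak{a})$ --- this is how Helgason and Knapp prove $W=W(\Sigma)$. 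Note that your uniqueness step for the $K$-action on $\mathfrak{s}$ also rests on this same lemma: the reduction through the centralizer $K_{H_2}$ to an element of $N_K(\mathfrak{a})$ carrying $H_1$ to $H_2$ is sound, but the ``$W$-case of the fundamental domain'' in its strict form is again equivalent to $W=W'$. With this one lemma supplied, the remainder of your proposal is correct and follows the route the paper implicitly cites.
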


Let $\mathfrak{g}=\mathfrak{k}\oplus\mathfrak{s}$ be a real reductive Lie algebra
with complexification $\mathfrak{g}_c=\mathfrak{u}\oplus\mathfrak{p}$ as above.
For $X\in\mathfrak{a}_+$ we shall denote 
\[ \mathfrak{X}=\{kXk^{\ast};k\in K\}\subset\mathfrak{s} \]
and call it the isospectral class of $X$ in $\mathfrak{s}$. 
By construction, $\mathfrak{X}$ is connected, and $X=\mathfrak{X}\cap\mathfrak{a}_+$ 
by the above theorem. If we denote 
\[ \mathfrak{X}_c=\{uXu^{\ast};u\in U\}\subset\mathfrak{p} \]
then $\mathfrak{X}_c$ has the structure of a complex manifold with 
real form $\mathfrak{X}=\mathfrak{X}_c\cap\mathfrak{s}$.
Moreover $\mathfrak{X}_c$ has a K\"{a}hler metric, whose imaginary part
is the Kirillov-Kostant--Souriau symplectic form $\omega$ on $\mathfrak{X}_c$.
The action of $U$ on $\mathfrak{X}_c$ is Hamiltonian with moment map
the inclusion $\mathfrak{X}_c\hookrightarrow\mathfrak{p}$. For
this reason, we shall call the action of $K$ on the real form $\mathfrak{X}$
a real Hamiltonian action with real moment map the inclusion
$\mathfrak{X}\hookrightarrow\mathfrak{s}$.

We now have set up all the notations in order to formulate the convexity 
theorem of O'Shea and Sjamaar \cite{O'Shea--Sjamaar 2000} in case of a real reductive Lie algebra.

\begin{theorem} \label{Thm: real reductive O'Shea--Sjamaar}
Suppose $(\mathfrak{g},\theta)<(\mathfrak{g}',\theta')$ 
is a comparable pair of real reductive Lie algebras with Cartan involution. 
For $\mathfrak{X}'\subset\mathfrak{s}'$ a fixed isospectral class the orthogonal
projection $\mu:\mathfrak{X}'\rightarrow\mathfrak{s}$
is clearly equivariant for the conjugation action of $K$, and is called the real
moment map for the real Hamiltonian action of $K$ on $\mathfrak{X}'$. Under all
these assumptions, the intersection
\[ \mu(\mathfrak{X}')\cap\mathfrak{a}_+ \]
is a convex polytope, called the moment polytope of the real Hamiltonian action 
of $K$ on $\mathfrak{X}'$.
\end{theorem}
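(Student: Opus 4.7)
The plan is to reduce the statement to the O'Shea--Sjamaar real convexity theorem for antisymplectic involutions on K\"ahler Hamiltonian manifolds \cite{O'Shea--Sjamaar 2000}, applied after complexifying the whole picture into the K\"ahler orbit $\mathfrak{X}'_c \subset \mathfrak{p}'$ already introduced before the theorem. On the complex side, Kirwan's convexity theorem produces a polytope; the substantive content of O'Shea--Sjamaar is then to show that this same polytope is already attained on the real form.

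First I would set up the complex Hamiltonian picture. The complex isospectral class $\mathfrak{X}'_c = \{uXu^{\ast}\colon u\in U'\}$ is a compact connected K\"ahler manifold, on which $U'$ acts Hamiltonianly with moment map the inclusion $\mathfrak{X}'_c\hookrightarrow\mathfrak{p}'\cong(\mathfrak{u}')^{\ast}$ (the identification being via the trace form, which is negative definite on $\mathfrak{u}'$ and positive definite on $\mathfrak{p}'$). Restricting the action along the compact subgroup $U<U'$ produces the $U$-moment map
\[ \mu_c\colon\mathfrak{X}'_c\to\mathfrak{p},\qquad \mu_c(Y)=\mathrm{pr}_{\mathfrak{p}}(Y), \]
the orthogonal projection from $\mathfrak{p}'$ onto $\mathfrak{p}$ with respect to the trace form, which is manifestly $K$-equivariant.

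Next I would identify the real form $\mathfrak{X}'$ as the fixed locus of an antisymplectic involution. Complex conjugation of $\mathfrak{g}'_c$ along $\mathfrak{g}'$ restricts to an antiholomorphic involution $\sigma$ of $\mathfrak{X}'_c$ whose fixed set is exactly $\mathfrak{X}'_c\cap\mathfrak{s}'=\mathfrak{X}'$. Antiholomorphic K\"ahler-preserving involutions are antisymplectic, and one verifies the O'Shea--Sjamaar compatibility $\mu_c\circ\sigma=-\theta\circ\mu_c$ (coming from the sign change of $\theta$ on $\mathfrak{p}$ under conjugation) along with the $\theta$-equivariance of $\sigma$ relative to the $U$-action. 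The real moment map $\mu$ of the theorem coincides with the restriction of $\mu_c$ to $\mathfrak{X}'$, landing in $\mathfrak{s}=\mathfrak{p}\cap\mathfrak{g}'$.

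With these identifications, Kirwan's convexity theorem applied to the compact connected K\"ahler Hamiltonian $U$-manifold $\mathfrak{X}'_c$ shows that $\mu_c(\mathfrak{X}'_c)\cap\mathfrak{a}_+$ is a convex polytope, after identifying $\mathfrak{a}_+\subset\mathfrak{s}$ with the corresponding real slice of the positive Weyl chamber for $U$ in $\mathfrak{p}$. The O'Shea--Sjamaar real convexity theorem then supplies the nontrivial equality
\[ \mu(\mathfrak{X}')\cap\mathfrak{a}_+\;=\;\mu_c(\mathfrak{X}'_c)\cap\mathfrak{a}_+, \]
so the real intersection on the left is itself a convex polytope, as claimed.

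The main obstacle is precisely this last equality. The inclusion $\mu(\mathfrak{X}')\cap\mathfrak{a}_+\subset\mu_c(\mathfrak{X}'_c)\cap\mathfrak{a}_+$ is immediate from $\mathfrak{X}'\subset\mathfrak{X}'_c$ and $\mu=\mu_c|_{\mathfrak{X}'}$; the reverse inclusion, that every value in Kirwan's complex polytope is attained already at a point of the real form, is the heart of O'Shea--Sjamaar. Their argument combines an equivariant local normal form for $\sigma$ at each real fixed point, modelled on Marle--Guillemin--Sternberg, with a connectedness-of-level-sets analysis for the components of $\mu_c$ restricted to $\mathfrak{X}'$. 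This mirrors Kirwan's Morse-theoretic proof of complex convexity, carried out $\sigma$-equivariantly, and is the step where the real-reductive structure of the pair really enters.
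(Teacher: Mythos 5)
Your proposal is correct and takes essentially the same route as the paper: the paper likewise realizes $\mathfrak{X}'$ as the fixed locus of the antisymplectic involution $\tau$ (the restriction of $-\theta'$ to $\mathfrak{p}'$, i.e.\ conjugation along $\mathfrak{g}'$) on the coadjoint orbit $\mathfrak{X}'_c$, obtains the convexity of $\mu(\mathfrak{X}'_c)\cap\mathfrak{a}_+$ from the K\"ahler convexity theorem (citing Heckman's coadjoint-orbit version where you cite Kirwan --- an immaterial difference in this setting), and then invokes the O'Shea--Sjamaar equality $\mu(\mathfrak{X}')\cap\mathfrak{a}_+=\mu(\mathfrak{X}'_c)\cap\mathfrak{a}_+$ as the decisive step. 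Like you, the paper treats that equality as a black box from \cite{O'Shea--Sjamaar 2000} (deducing it from their general theorem $\mu(M^{\tau})\cap\mathfrak{a}_+=\mu(M)\cap\mathfrak{a}_+$ for compatible pairs $(\theta,\tau)$), so your closing sketch of their normal-form and Morse-theoretic argument goes, if anything, slightly beyond what the paper itself supplies.
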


This theorem has a long history, and we shall mention just a few selected references. 
In case $(\mathfrak{g},\theta)<(\mathfrak{g}',\theta')$ are both complex reductive
Lie algebras with Cartan involution the theorem is due to Heckman \cite{Heckman 1982}.
The result was generalized by Guillemin and Sternberg, who replaced the coadjoint
orbit $\mathfrak{X}'$ of the overgroup $K'$ by a complex projective manifold with 
a Fubini--Study metric $h$ with a holomorphic linearizable action of $K$, which 
leaves the symplectic form $\omega=\Im{h}$ invariant, and $\mu$ the moment map
for this Hamiltonian action of $K$ \cite{Guillemin--Sternberg 1984}. This result was 
also obtained by Mumford, published in the appendix of a paper by 
Ness \cite{Ness 1984}.
This is the non-abelian convexity theorem in the K\"ahler case, which generalizes the former 
Abelian convexity theorem of Atiyah \cite{Atiyah 1982}, and that of Guillemin and Sternberg 
\cite{Guillemin--Sternberg 1982}. 
The proof of the general case without assuming the symplectic manifold to be K\"ahler was found 
by Kirwan \cite{Kirwan 1984}. These works were all done in the early eighties with 
many more exciting developments in moment map geometry.  

It took almost two decades before O'Shea and Sjamaar discovered the natural real setting 
of the convexity theorem, which generalizes the Abelian real convexity theorem of Duistermaat 
\cite{Duistermaat 1983}.

Indeed, consider the commutative diagram
\[ \begin{CD}
   \mathfrak{s}' \supset \mathfrak{X}' @>>> \mathfrak{X}'_c \subset \mathfrak{p}' \\
                 @VV{\mu}V                                  @V{\mu}VV             \\
   \mathfrak{s} \supset \mu(\mathfrak{X}') @>>> \mu(\mathfrak{X}'_c) \subset \mathfrak{p}
   \end{CD} \]
with $\mathfrak{X}'_c=\{uXu^{\ast};u\in U'\}$. As before, $\mathfrak{X}'_c$ can be
canonically identified with a coadjoint orbit of $U'$. Therefore it has a natural symplectic
form $\omega'$, for which the action of $U'$ by conjugation is Hamiltonian with moment map
the inclusion $\mathfrak{X}'_c\hookrightarrow\mathfrak{p}'$. The restriction of the action
from $U'$ to $U$ gives a moment map $\mu:\mathfrak{X}'_c\rightarrow\mathfrak{p}$, which 
is just the restriction of the orthogonal projection $\mathfrak{p}'\rightarrow\mathfrak{p}$.

The space $\mathfrak{X}'_c$ has a natural antisymplectic involution $\tau$, which is just the
restriction of the antiinvolution $-\theta'$ of $\mathfrak{p}'=\mathfrak{s}'\oplus i\mathfrak{k}'$,
taken $+1$ on $\mathfrak{s}'$ and $-1$ on $i\mathfrak{k}'$. In turn, the fixed point
locus of $\tau$ on $\mathfrak{X}'_c$ is just $\mathfrak{X}'=\mathfrak{X}'_c\cap\mathfrak{s}'$.
Hence the map $\mu:\mathfrak{X}'\rightarrow\mathfrak{s}$ is nothing but the restriction
of $\mu:\mathfrak{X}'_c\rightarrow\mathfrak{p}$ to the real form $\mathfrak{X}'$.
This explains our use of the terms \emph{real moment map} and \emph{real Hamiltonian action}.

If $\mathfrak{h}\subset\mathfrak{p}$ is a maximal commutative subspace with
$\mathfrak{h}\cap\mathfrak{s}=\mathfrak{a}$ and $\mathfrak{h}_+$ is an adapted
positive Weyl chamber, in the sense that $\mathfrak{h}_+\cap\mathfrak{a}=\mathfrak{a}_+$, then
\[ \mu(\mathfrak{X}_c')\cap\mathfrak{a}_+=
(\mu(\mathfrak{X}'_c)\cap\mathfrak{h}_+)\cap\mathfrak{a} \] 
is a convex polytope by the convexity theorem of Heckman.

Theorem \ref{Thm: real reductive O'Shea--Sjamaar} is therefore a direct consequence of the following result, 
which is also due to O'Shea and Sjamaar.

\begin{theorem} \label{Thm: Duistermaat}
We have $\mu(\mathfrak{X}')\cap\mathfrak{a}_+=\mu(\mathfrak{X}'_c)\cap\mathfrak{a}_+$\;.
\end{theorem}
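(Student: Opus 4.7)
The plan. The inclusion $\mu(\mathfrak{X}')\cap\mathfrak{a}_{+}\subseteq\mu(\mathfrak{X}'_{c})\cap\mathfrak{a}_{+}$ is immediate from $\mathfrak{X}'\subseteq\mathfrak{X}'_{c}$, since both moment maps are restrictions of the same orthogonal projection. All the content lies in the reverse inclusion: fix $Y\in\mu(\mathfrak{X}'_{c})\cap\mathfrak{a}_{+}$ and produce some $W\in\mathfrak{X}'$ with $\mu(W)=Y$. Equivalently, writing $F:=\mu^{-1}(Y)\cap\mathfrak{X}'_{c}$ for the non-empty moment fibre, one must show $F\cap\mathfrak{X}'\neq\emptyset$.

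My first step would be to record the compatibility of the antisymplectic involution $\tau$ with the moment map. Because $\tau$ acts as $+1$ on $\mathfrak{s}'$ and $-1$ on $i\mathfrak{k}'$ and these two summands of $\mathfrak{p}'$ are mutually orthogonal for the trace form, $\tau$ is an isometry of $\mathfrak{p}'$. The orthogonal projection $\mathfrak{p}'\to\mathfrak{p}$ preserves each summand and hence commutes with $\tau$; consequently $\mu\circ\tau=\tau\circ\mu$. Since $Y\in\mathfrak{a}_{+}\subset\mathfrak{s}$ is fixed by $\tau$, the fibre $F$ is $\tau$-invariant. The action of $K$ commutes with $\tau$ as well, because $K$ preserves the decomposition $\mathfrak{p}'=\mathfrak{s}'\oplus i\mathfrak{k}'$. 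Everything has now been reduced to a single assertion about the fixed-point set of an antisymplectic involution on a compact Hamiltonian $K$-manifold: $F^{\tau}\neq\emptyset$.

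To establish this non-emptiness I would invoke the O'Shea--Sjamaar machinery. One considers the smooth, non-positive, $\tau$- and $K_{Y}$-invariant function $f=-\tfrac12|\mu-Y|^{2}$ on the compact manifold $\mathfrak{X}'_{c}$, whose supremum $0$ is attained precisely on $F$. A Ness-style stratification of $\mathfrak{X}'_{c}$ by the gradient flow of $f$, together with the $\tau$-equivariance of that flow, allows one to restrict to the real form $\mathfrak{X}'$ and, at each critical stratum, reduce via a symplectic slice to the Hamiltonian action of a maximal torus of $K_{Y}$ equipped with a compatible antisymplectic involution. On the slice the required statement is exactly Duistermaat's real convexity theorem, whose proof combines the abelian symplectic convexity theorem of Atiyah--Guillemin--Sternberg with the connectedness of the complex moment fibres. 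The main obstacle is precisely this abelianisation step: one must verify that at each critical stratum the residual symplectic slice, its Hamiltonian torus action, and the restricted involution are all mutually compatible, and that the abelian conclusion can be propagated back to the top stratum of $\mathfrak{X}'_{c}$. This compatibility check is the technical heart of the O'Shea--Sjamaar argument.
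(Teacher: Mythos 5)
Your proposal is correct in substance, and up to the point where you invoke ``the O'Shea--Sjamaar machinery'' it coincides with what the paper actually does, because the paper does not prove this theorem independently either. The paper's treatment consists exactly of the verification you carry out: it constructs the antisymplectic involution $\tau$ on the compact orbit $\mathfrak{X}'_c$ as the restriction of $-\theta'$ on $\mathfrak{p}'=\mathfrak{s}'\oplus i\mathfrak{k}'$, notes that its fixed locus is $\mathfrak{X}'=\mathfrak{X}'_c\cap\mathfrak{s}'$, checks the compatibilities $\tau(ux)=\theta(u)\tau(x)$ and $\mu(\tau(x))=-\theta(\mu(x))$, and then simply quotes the general O'Shea--Sjamaar theorem $\mu(M^{\tau})\cap\mathfrak{a}_+=\mu(M)\cap\mathfrak{a}_+$ for a compact Hamiltonian $U$-manifold with compatible involutions, remarking that it is ``readily seen'' to specialize to the statement at hand. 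Your observation that the projection $\mathfrak{p}'\to\mathfrak{p}$ intertwines the involutions (your ``$\mu\circ\tau=\tau\circ\mu$'' is the paper's $\mu\circ\tau=-\theta\circ\mu$, which suffices here since $\mathfrak{a}_+\subset\mathfrak{s}$ is pointwise fixed), that $K=U^{\theta}$ commutes with $\tau$, and that the hard inclusion is equivalent to $F^{\tau}\neq\emptyset$ for each fibre $F=\mu^{-1}(Y)$, $Y\in\mathfrak{a}_+$, is precisely this verification of hypotheses, phrased fibrewise.

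Where you diverge is in attempting to open the black box, and there two caveats are in order. First, judged as a standalone argument your final paragraph is a sketch, not a proof: the ``technical heart'' you name is exactly the step you leave unverified, so if citation of \cite{O'Shea--Sjamaar 2000} were disallowed the proposal would have a genuine gap at $F^{\tau}\neq\emptyset$. In particular, $\tau$-invariance of the gradient flow of $-\tfrac12|\mu-Y|^2$ does not by itself produce a point of $M^{\tau}$ flowing into the minimum set $F$, since trajectories starting on $M^{\tau}$ may limit on higher critical sets; extracting $F^{\tau}\neq\emptyset$ from the stratification requires the full inductive apparatus. Second, your rendition via a Ness-style stratification is not a faithful account of how O'Shea and Sjamaar argue: their published proof reduces to Duistermaat's abelian theorem by means of symplectic cross-sections and the local normal form, inducting over the faces of the chamber, rather than by a gradient-flow stratification of $|\mu-Y|^2$. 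Since the paper itself settles for the citation, your proposal stands at the same level of completeness as the paper's own treatment, and the extra sketch should be read as heuristic commentary on the cited proof rather than as part of the argument.
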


We have restricted ourselves to the case of (co)adjoint orbits for a real reductive Lie 
algebra, which both suffices for our purpose and keeps the exposition as concrete as 
possible. In their paper, O'Shea and Sjamaar formulated everything in the general setting of a
Hamiltonian action of a connected compact Lie group $U$ on a connected symplectic manifold 
$(M,\omega)$. Suppose that the group $U$ has an
involution $\theta$ with fixed point 
group $K$, and the space $(M,\omega)$ has an antisymplectic involution $\tau$ with $M^{\tau}$ 
not empty. These two structures are assumed to be compatible, in the sense that 
$$\tau(ux)=\theta(u)\tau(x) \hbox{ and } \mu (\tau(x))=-\theta(\mu(x))$$ for all $u\in U$ and $x\in M$. Under these conditions, O'Shea and Sjamaar obtained the 
following general result

\begin{theorem}
We have $\mu(M^{\tau})\cap\mathfrak{a}_+=\mu(M)\cap\mathfrak{a}_+$ and the right hand side
\[ \mu(M)\cap\mathfrak{a}_+=(\mu(M)\cap\mathfrak{h}_+)\cap\mathfrak{a} \]
is indeed a convex polytope by the convexity theorem of Kirwan.
\end{theorem}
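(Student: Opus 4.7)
The plan is to prove the two conclusions of the theorem---the convexity of the displayed set and its coincidence with the image of the real locus---separately. The convexity is an immediate consequence of Kirwan's theorem: applied to the Hamiltonian $U$-action on $(M,\omega)$, it gives that $\mu(M)\cap\mathfrak{h}_+$ is a convex polytope. Since $\mathfrak{a}\subset\mathfrak{h}$ and the positive chambers are adapted so that $\mathfrak{h}_+\cap\mathfrak{a}=\mathfrak{a}_+$, we obtain $\mu(M)\cap\mathfrak{a}_+=(\mu(M)\cap\mathfrak{h}_+)\cap\mathfrak{a}$, the intersection of a polytope with a linear subspace, hence itself a convex polytope. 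This simultaneously settles the displayed identity and the polytope claim.

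For the equality $\mu(M^\tau)\cap\mathfrak{a}_+=\mu(M)\cap\mathfrak{a}_+$, the inclusion $\subseteq$ is trivial since $M^\tau\subseteq M$. For the nontrivial reverse inclusion, fix $\xi\in\mu(M)\cap\mathfrak{a}_+$; the task is to produce $x\in M^\tau$ with $\mu(x)=\xi$. A first remark: since $\theta$ acts as $-1$ on $\mathfrak{a}$ (which lies in the $(-1)$-eigenspace of $\theta$ in $\mathfrak{u}$), the compatibility $\mu(\tau(y))=-\theta(\mu(y))$ implies that the fiber $\mu^{-1}(\xi)$ is $\tau$-stable. It therefore suffices to exhibit a $\tau$-fixed point inside this fiber.

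The core step, and the main technical obstacle, is to reduce the nonabelian Hamiltonian action to an abelian one via the symplectic cross-section theorem of Guillemin and Sternberg. Let $\sigma\subseteq\mathfrak{h}_+$ be the open face whose relative interior contains $\xi$, let $L\subseteq U$ be its centralizer (a Levi subgroup), and let $Y$ be the associated symplectic cross-section: an $L$-invariant symplectic submanifold of $M$ on which $\mu|_Y$ is a moment map for $L$ and which has $\xi$ in the relative interior of its moment image. Using the two compatibility relations $\tau(uy)=\theta(u)\tau(y)$ and $\mu(\tau(y))=-\theta(\mu(y))$, one checks that $\tau$ preserves $Y$ and restricts to an antisymplectic involution compatible with the action of the fixed subgroup $L\cap K$. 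Performing symplectic reduction by the semisimple part of $L$ (which is $\theta$-stable and commutes with $\tau$), one arrives at a Hamiltonian action of the central torus of $L$ on a symplectic manifold still equipped with a compatible antisymplectic involution, with $\xi$ in the moment image.

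At this point Duistermaat's abelian real convexity theorem applies directly: for a Hamiltonian torus action with a compatible antisymplectic involution, the moment image of the fixed locus equals the full moment image. This produces a $\tau$-fixed preimage of $\xi$ in the reduced abelian setting, and lifting it back through the cross-section completes the proof. Conceptually, the argument follows the O'Shea--Sjamaar strategy: Kirwan delivers the nonabelian convexity, Duistermaat handles the real locus in the abelian case, and the symplectic cross-section---together with the verification that $\tau$ descends compatibly through reduction---is the bridge that reduces the nonabelian real statement to Duistermaat's abelian one.
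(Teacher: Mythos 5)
You should first note that the paper does not actually prove this theorem: it is quoted as the general result of O'Shea and Sjamaar, and the paper only uses it to deduce Theorem~\ref{Thm: Duistermaat}. So the benchmark is the proof in \cite{O'Shea--Sjamaar 2000}, which runs through involution-compatible (``real'') versions of the local normal form and symplectic cross-section theorems together with a local-global convexity argument on the Lagrangian submanifold $M^{\tau}$. Your first half is fine: Kirwan gives that $\mu(M)\cap\mathfrak{h}_+$ is a convex polytope (note this needs $M$ compact or $\mu$ proper, which you should state), and the adaptedness $\mathfrak{h}_+\cap\mathfrak{a}=\mathfrak{a}_+$ makes the displayed identity and the polytope claim immediate, exactly as the paper uses them.

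The second half, however, has a genuine gap, and there is a telltale sign: your argument never uses the hypothesis, explicitly imposed in the paper, that $M^{\tau}$ is \emph{not empty}. Without it the statement is false: take $U=S^1$ acting trivially with $\theta=\mathrm{id}$ (so $\mathfrak{a}=0$, $\mu\equiv 0$) and $\tau$ the antipodal map on $M=S^2$, which is antisymplectic and fixed-point free; then $\mu(M^{\tau})\cap\mathfrak{a}_+=\emptyset$ while $\mu(M)\cap\mathfrak{a}_+=\{0\}$. So your argument must break somewhere, and it breaks at two concrete points. First, reduction by the semisimple part $[L,L]$ at level $0$ is in general singular: the zero level set of the semisimple component of the moment map need not be smooth and the action on it need not be free, so you do not obtain ``a symplectic manifold still equipped with a compatible antisymplectic involution,'' and Duistermaat's theorem (which itself requires the fixed locus to be nonempty, besides compactness and connectedness) does not apply as stated. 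Second, and more fundamentally, the closing step ``lifting it back through the cross-section'' is precisely the crux of the theorem, not a formality: a fixed point of the induced involution downstairs only yields a point $x$ with $\tau(x)=gx$ for some group element $g$, and replacing $x$ by $vx$ with $\tau(vx)=vx$ amounts to solving $g=\theta(v)^{-1}v$ modulo the stabilizer; the twisted Cartan map $v\mapsto\theta(v)^{-1}v$ is not surjective onto the relevant set (for $\theta=\mathrm{id}$ its image is trivial, and $\tau$ can act freely on a single $\tau$-stable orbit, as the $S^2$ example shows). For the same reason, your opening observation that the fiber $\mu^{-1}(\xi)$ is $\tau$-stable buys essentially nothing: a $\tau$-stable fiber can lack $\tau$-fixed points. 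Producing such fixed points is exactly what O'Shea--Sjamaar's real local normal form and real cross-section machinery accomplishes, with $M^{\tau}\neq\emptyset$ and the connectedness of $M$ entering essentially; at this step your sketch is assuming the conclusion.
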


It is readily seen that this implies Theorem \ref{Thm: Duistermaat}.

\section{Pushforward of the normalized measure by the real moment map}
We start this section by explaining the notion of Gelfand pairs, and their associated harmonic analysis.

\subsection*{Harmonic analysis for Gelfand pairs}
\begin{definition}
A locally compact unimodular topological group $G$ with a compact subgroup $K<G$ is called
a \emph{Gelfand pair} if the natural unitary representation of $G$ on $L^2(G/K,dx)$ decomposes 
in a multiplicity free way. 
\end{definition}

It can be shown that this definition is equivalent to the following one:

\begin{definition}A pair $K<G$ is called a \emph{Gelfand pair}, if for any 
irreducible unitary representation $(V,\langle\cdot,\cdot\rangle)$ of $G$, the restriction 
from $G$ to $K$ contains the trivial representation of $K$ with multiplicity at most $1$. 
\end{definition}

\begin{definition}
For a Gelfand pair $K<G$, an irreducible unitary representation $(V,\langle\cdot,\cdot\rangle)$ of $G$ is called \emph{spherical} if $V^K=\mathbb{C}v$ has dimension $1$ for some $v\in V$ with $\langle v, v\rangle=1$. The function 
$$G \ni g\mapsto\phi_V(g)=\langle g v,v\rangle$$
is called the \emph{elementary spherical function} associated with the spherical 
representation $V$.
\end{definition}

Note that elementary spherical functions are normalized by 
$\phi_V(e)=1$. 

\begin{definition}Any function on $G$ that is both left and right invariant under $K$ 
 is called a \emph{spherical function}.
\end{definition}

Yet, a third equivalent definition for a Gelfand pair is the following:

\begin{definition}The pair $K<G$ is a \emph{Gelfand pair} if the 
Hecke algebra $\mathcal{H}(G/K)$ of continuous spherical functions on $G$ 
with compact support is commutative with respect to the convolution product. 
\end{definition}

The elementary spherical functions are the simultaneous eigenfunctions 
for the commutative algebra $\mathcal{H}(G/K)$, acting as convolution
integral operators on the space of spherical functions. 

Finally, in case that $G$ is a connected Lie group, there is a fourth equivalent definition for a Gelfand pair:

\begin{definition} For a connected Lie group $G$, the pair $K<G$ is a \emph{Gelfand pair} 
if and only if the algebra $\mathcal{D}(G/K)$ of linear differential operators on $G/K$, 
which are invariant under $G$, is commutative. 
\end{definition}

Similarly, the elementary spherical 
functions are the simultaneous eigenfunctions for the commutative algebra $\mathcal{D}(G/K)$, 
acting as invariant differential operators on the space of spherical functions.

Under all these equivalent conditions, the abstract spherical inversion theorem gives 
the existence of a unique positive measure $\mu_{\mathrm{P}}$ on the set $\widehat{G/K}$ of equivalence 
classes of unitary irreducible spherical representations of $K<G$, called the \emph{spherical 
Plancherel measure}, such that
\[ \phi(x)=\int_{\widehat{G/K}}\;\hat{\phi}(V)\phi_V(x)d\mu_{\mathrm{P}}(V) \]
for all $\phi\in\mathcal{H}(G/K)$, with
\[ \hat{\phi}(V)=\int_{G/K}\;\phi(x)\overline{\phi_V(x)}dx \]
the so called \emph{spherical Fourier transform} of $\phi\in\mathcal{H}(G/K)$. 

The case that $K$ 
is the trivial subgroup of $G=\mathbb{R}_+$ or $G=\{z\in\mathbb{C}^{\times};|z|=1\}$ gives 
the classical inversion formula for Fourier integrals and Fourier series respectively.

\subsection*{Harmonic analysis for Riemannian symmetric pairs }
After a brief exposition of the harmonic analysis for general Gelfand pairs, we now
come to certain particular cases of our interest. In the notation of the
previous section, these are the Riemannian symmetric space $G/K$ of noncompact type,
its compact dual Riemannian symmetric space $U/K$ and, finally, the intermediate flat 
tangent space $\mathfrak{s}$, considered as homogeneous space for the so called \emph{Cartan 
motion group} $\mathfrak{s}\rtimes K$. 

The spherical inversion formula was made explicit in the case $K<G$ by Harish-Chandra \cite{Harish-Chandra CP}
with simplifications by Helgason, Gangolli and Rosenberg (cf. \cite {Helgason 1984}).
Harish-Chandra enlarged the set $\widehat{G/K}$ of equivalence classes of spherical 
irreducible unitary representations of $G$ to the set $\widetilde{G/K}$ of equivalence 
classes of spherical continuous irreducible representations of $G$ on a Hilbert space, 
which are only unitary for the subgroup $K$. 
He showed that $\widetilde{G/K}\cong\mathfrak{a}_c/W$ and derived the 
\emph{Harish-Chandra isomorphism} $$\mathcal{D} (G/K) \cong S \mathfrak{a}_c^W, D\mapsto\gamma_{D} ,$$
in which $S\mathfrak{a}_c^W$ denotes the symmetric algebra of $\mathfrak{a}_c^W$. The associated elementary spherical
functions are given by the \emph{Harish-Chandra integral formula}
\[ \phi_{\lambda}(g)=\int_K\;a(gk)^{\lambda-\rho}dk=\int_K\;e^{(\lambda-\rho,A(gk))}dk \]
with Iwasawa decomposition $G=KAN,g=k(g)a(g)n(g)$,  Iwasawa projection $A(g)=\log{a(g)}$, 
the restricted Weyl vector $\rho$ (half sum of positive restricted roots counting multiplicities) and the normalized Haar measure $dk$ on $K$. 

These elementary spherical functions are solutions of the system of differential equations
\[ D\phi_{\lambda}=\gamma_{D}(\lambda)\phi_{\lambda},\quad D \in \mathcal{D} (G/K)\]
with normalization $\phi_{\lambda}(e)=1$ as before. The spherical inversion theorem now takes the form
\[ \phi(x)=\frac{1}{|W|}\int_{i\mathfrak{a}}\;
\hat{\phi}(\lambda)\phi_{\lambda}(x)\frac{d\mu_L(\lambda)}{|c(\lambda)|^2} \]
with spherical Fourier transform
\[ \hat{\phi}(\lambda)=\int_{G/K}\;\phi(x)\overline{\phi_{\lambda}(x)}dx, \]
the Lebesgue measure $\mu_L$ on $i\mathfrak{a}$ and the
Harish-Chandra c-function $\lambda\mapsto 
c(\lambda)$, given as an explicit product of $\Gamma$-factors by
the Gindikin--Karpelevic formula. 

The pair $K<\mathfrak{s}\rtimes K$, with the semidirect product $\mathfrak{s} \rtimes K$ acting on
$\mathfrak{s}$ via 
rotations and translations, is a Gelfand pair as well, and the group $\mathfrak{s}\rtimes K$
is called the Cartan motion group of the space $\mathfrak{s}$. 
The algebra $\mathcal{D}(\mathfrak{s})$ of invariant linear differential operators is isomorphic 
to 
the algebra $S\mathfrak{s}_c^K\cong S\mathfrak{a}_c^W$ of $K$-invariant linear 
differential operators on $\mathfrak{s}$ with constant coefficients.
Its simultaneous eigenfunctions are the symmetrized plane waves
\[ \psi_{\lambda}(X)=\int_K\;e^{(\lambda,kXk^{\ast})}dk \]
normalized by $\psi_{\lambda}(0)=1$ for all $\lambda\in\mathfrak{a}_c$ and $X\in\mathfrak{s}$.
The spherical inversion theorem is a direct consequence of the classical inversion theorem for the
Euclidean Fourier transform on $\mathfrak{s}$, applied for functions invariant under $K$. In a sense,
we can consider this theory on the flat space $\mathfrak{s}$ as the classical spectral limit
of the above Harish-Chandra theory for the curved space $G/K$, by the help of the following formula
\begin{proposition} We have
\[ \psi_{\lambda}(X)=\lim_{n\rightarrow\infty}\;\phi_{n\lambda}(\exp(X/n)) \]
for all $\lambda \in \mathfrak{a}_{c}$ and $X \in \mathfrak{s}$.
\end{proposition}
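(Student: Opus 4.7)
The plan is to compute the limit directly from the Harish-Chandra integral formula for $\phi_{n\lambda}$ and a first-order asymptotic analysis of the Iwasawa projection. Substituting,
\[ \phi_{n\lambda}(\exp(X/n)) = \int_K e^{(n\lambda-\rho,\, A(\exp(X/n)k))}\, dk. \]
Using the left $K$-invariance of the Iwasawa $A$-projection, the integrand depends on $k$ only through $Y_k := \mathrm{Ad}(k^{-1})X \in \mathfrak{s}$, since
\[ A(\exp(X/n)\,k) \;=\; A\bigl(k\,\exp(Y_k/n)\bigr) \;=\; A\bigl(\exp(Y_k/n)\bigr). \]

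The technical heart of the argument is the infinitesimal Iwasawa expansion
\[ A(\exp(tY)) \;=\; t\, \pi_\mathfrak{a}(Y) \;+\; O(t^2) \qquad (t\to 0), \]
uniformly for $Y$ in any compact subset of $\mathfrak{s}$, where $\pi_\mathfrak{a}\colon \mathfrak{s}\to\mathfrak{a}$ is orthogonal projection with respect to the trace form. To establish this, I would use the restricted root decomposition: any $Y\in\mathfrak{s}$ has the form $Y_\mathfrak{a} + \sum_{\alpha>0}(X_\alpha-\theta X_\alpha)$ with $Y_\mathfrak{a}\in\mathfrak{a}$ and $X_\alpha\in\mathfrak{g}_\alpha$, and rewriting
\[ Y \;=\; Y_\mathfrak{a} \;-\; \sum_{\alpha>0}(X_\alpha+\theta X_\alpha) \;+\; \sum_{\alpha>0} 2X_\alpha \;\in\; \mathfrak{a}\oplus\mathfrak{k}\oplus\mathfrak{n} \]
identifies the $\mathfrak{a}$-component of $Y$ under the Iwasawa splitting as $Y_\mathfrak{a} = \pi_\mathfrak{a}(Y)$. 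The $O(t^2)$ remainder then follows from the smoothness of the Iwasawa map $G\to\mathfrak{a}$ near the identity together with a Baker--Campbell--Hausdorff estimate.

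Applying this with $t = 1/n$ yields $n\,A(\exp(X/n)k) = \pi_\mathfrak{a}(Y_k) + O(1/n)$ uniformly in $k\in K$. The $\rho$-contribution is $O(1/n)\to 0$, while $(n\lambda,\, A(\exp(X/n)k))\to (\lambda,\pi_\mathfrak{a}(Y_k)) = (\lambda,Y_k)$, the last equality since $\lambda\in\mathfrak{a}_c$ is trace-orthogonal to $\mathfrak{s}\ominus\mathfrak{a}$. Dominated convergence — justified by the uniform bound on $\|A(\exp(X/n)k)\|$ and compactness of $K$ — then gives
\[ \lim_{n\to\infty} \phi_{n\lambda}(\exp(X/n)) \;=\; \int_K e^{(\lambda,\, \mathrm{Ad}(k^{-1})X)}\, dk, \]
and the substitution $k\mapsto k^{-1}$ in the bi-invariant Haar integral, together with $\mathrm{Ad}(k)X = kXk^{\ast}$ for $k\in K\subset\mathrm{SO}(E)$, converts the right-hand side to $\int_K e^{(\lambda, kXk^{\ast})}\, dk = \psi_\lambda(X)$.

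The main obstacle lies in the infinitesimal Iwasawa step. Although the final statement is clean — the $\mathfrak{a}$-component of the Iwasawa decomposition, restricted to $\mathfrak{s}$, agrees with the orthogonal projection onto $\mathfrak{a}$ — one must carry out the BCH expansion with enough care to secure a uniform $O(t^2)$ error on a compact set, in order to legitimately pass to the limit under the integral.
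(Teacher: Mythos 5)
Your proposal is correct and takes essentially the same approach as the paper: the Harish-Chandra integral formula plus the key lemma that the infinitesimal Iwasawa projection restricted to $\mathfrak{s}$ coincides with the orthogonal projection onto $\mathfrak{a}$ (your restricted-root regrouping $Y=Y_\mathfrak{a}-\sum_{\alpha>0}(X_\alpha+\theta X_\alpha)+\sum_{\alpha>0}2X_\alpha$ is the same linear-algebra fact the paper obtains by writing $X=Y+H+Z$ and using $\theta X=-X$ to get $X=H+(Z-\theta Z)/2$), followed by passing to the limit under the $K$-integral. If anything you are more careful than the paper, which silently interchanges limit and integral, about the uniform $O(t^2)$ remainder and dominated convergence, and you treat $\lambda\in\mathfrak{a}_c$ directly where the paper proves the identity for $\lambda\in\mathfrak{a}$ and then extends to $\mathfrak{a}_c$.
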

\begin{proof} By Harish-Chandra's integral formula
$$
\phi_{\lambda} (\exp X) = \int_{K} \;e^{(\lambda-\rho, A (\exp X \cdot k ))} d k
                                      = \int_{K} \;e^{(\lambda-\rho, A (\exp (Ad(k) X) ))} d k,                                       
$$
we have
$$\phi_{n \lambda} \bigl(\exp (X/n)\bigr) 
                                      = \int_{K} \;e^{(n \lambda-\rho, A (\exp (Ad(k) X/n) ))} d k.  $$
On $ \mathfrak{s}$, the infinitesimal Iwasawa projection  $\mathfrak{s} \to \mathfrak{a}$  coincides with the orthogonal    
projection  $\mathfrak{s} \to \mathfrak{a}$. Indeed, if $X \in \mathfrak{s}$ has infinitesimal Iwasawa decomposition $X=Y + H + Z$, for which 
$Y \in \mathfrak{k}, H \in \mathfrak{a}, Z \in \mathfrak{n}$, then $X=H+(Z-\theta Z)/2$, which means that  $H$ is also the orthogonal projection of $X$ 
on $\mathfrak{a}$, as we have the orthogonal decomposition $\mathfrak{s} = \mathfrak{a} \oplus (\mathfrak{s} \cap (\mathfrak{n} \oplus  \theta \mathfrak{n}))$.
We therefore deduce that
$$\lim_{n \rightarrow\infty} \phi_{n \lambda} \bigl(\exp (X/n)\bigr) 
                                      = \int_{K} \; e^{(\lambda, \lim_{n \rightarrow\infty} n A (exp (Ad(k) X/n)))} d k
                                      =  \int_{K} \;e^{( \lambda, Ad(k) X)} d k, $$                        
which by definition is equal to $\psi_{\lambda}(X)$, for all $\lambda \in \mathfrak{a}$, and thus for all $\lambda \in \mathfrak{a}_{c}$.
\end{proof}


The elementary spherical function $(\lambda,x)\mapsto \phi_{\lambda}(x)$ is holomorphic and 
Weyl group invariant in the spectral variable $\lambda\in\mathfrak{a}_c$, and real 
analytic in the space variable $x\in G/K$, or, in other words, holomorphic in the 
space variable $x$ taken from a suitable tubular neighborhood of $G/K$ in the 
complexified space $G_c/K_c$.
It has a holomorphic extension to all of $G_c/K_c$ if and only if $(\lambda-\rho)$
lies in the intersection $L \cap \mathfrak{a}_{+}$ of a suitable lattice $L$ with the positive chamber $\mathfrak{a}_{+ } \subset \mathfrak{a}$,  
given in explicit terms 
of the restricted root system by the Cartan--Helgason theorem (\cite[Ch.V, Theorem 4.1]{Helgason 1984}). The corresponding 
irreducible spherical representation $V(\lambda)$ (with highest weight $(\lambda- \rho)$) is then finite dimensional, and
unitary for the compact form $U$ of $G_c$. If $v\in V(\lambda)^K$ is a normalized
spherical vector, then $\phi_{\lambda}(u)=\langle uv,v\rangle$ for $u\in U$
with $\langle\cdot,\cdot\rangle$ the invariant Hermitian form on $V(\lambda)$.

\subsection*{Pushforward of the normalized measure}

After this survey of the theory of spherical functions,  we can finally explain the
meaning of the pushforward under the real moment map 
$\mu:\mathfrak{X}'\rightarrow\mathfrak{s}$ of the normalized invariant measure
on $\mathfrak{X}'$ in the notation of Theorem \ref{Thm: real reductive O'Shea--Sjamaar}
in terms of spherical representation theory. Let $\lambda'\in L'_+=(L'\cap\mathfrak{a}'_+) + \rho'$
and $(V(\lambda'),\langle\cdot,\cdot\rangle)$ be the associated finite dimensional
spherical irreducible unitary representation of $U'$ with normalized spherical vector 
$v'\in V(\lambda')^{K'}$. Let $\phi_{\lambda'}(u')=\langle u'v',v'\rangle$ be the 
associated elementary spherical function on $U'/K'$. Its restriction to the
totally geodesic submanifold $U/K<U'/K'$ is given by 
\[ \phi_{\lambda'}(u)=\sum_{\lambda\in L_+}\;m_{\lambda'}(\lambda)\phi_{\lambda}(u) \]
with $m_{\lambda'}(\lambda)=\langle v_{\lambda},v_{\lambda}\rangle$ if $v'=\sum_{\lambda}v_{\lambda}$
is the primary decomposition of $v'$ into components $v_{\lambda}$ for $\lambda\in L_+$ of 
spherical vectors for the Gelfand pair $K<U$. For $\lambda'\in L'_+$, let 
\[ \mu:\mathfrak{X}'_{\lambda'}=\{k'\lambda'k'^{\ast};k'\in K'\}\rightarrow \mathfrak{s} \]
be the real moment map for the real Hamiltonian action of $K$, and let $d\mathfrak{x}'$ be
the 
normalized $K'$-invariant measure on $\mathfrak{X}'_{\lambda'}$, so that $\int d\mathfrak{x}'=1$.

\begin{theorem}
Let $\mu\mapsto\delta(\mu-\lambda)$ be the Dirac delta distribution on $\mathfrak{a}$ with unit mass 
at $\lambda$. The probability measure $\mu_{\lambda'}$ on $\mathfrak{a}_+$  given by
\[ d\mu_{\lambda'}(\mu)=\lim_{n\rightarrow\infty}\;\sum_{\lambda\in L_+}\;m_{n\lambda'}(\lambda)\delta(\mu-\lambda/n) \]
describes the pushforward measure $\mu_{\ast}(d\mathfrak{x}')$ on $\mathfrak{s}$ by the relation
\[ \int_{\mathfrak{s}}\;f(\lambda)\mu_{\ast}(d\mathfrak{x}')(\lambda)=
\int_{\mathfrak{a}_+}\;f(\lambda)d\mu_{\lambda'}(\lambda) \]
for all continuous functions $f$ on $\mathfrak{s}$, which are invariant under $K$.
\end{theorem}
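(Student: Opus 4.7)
The plan is to compare the spherical Fourier transforms of the two candidate $K$-invariant probability measures using the symmetrized plane waves $\psi_\eta(Y)=\int_K e^{(\eta,kYk^{\ast})}dk$ of the Cartan motion group pair $K<\mathfrak{s}\rtimes K$. Both $\mu_{\ast}(d\mathfrak{x}')$ and the purported measure $d\mu_{\lambda'}$ are supported in a compact subset of $\mathfrak{a}_+$ (the moment polytope of Theorem \ref{Thm: real reductive O'Shea--Sjamaar}), so agreement on the family $\{\psi_\eta:\eta\in\mathfrak{a}_c\}$ will suffice to identify them.

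First I would compute the pairing against the pushforward. For $\eta\in\mathfrak{s}$,
\[
\int_{\mathfrak{s}}\psi_\eta(Y)\,\mu_{\ast}(d\mathfrak{x}')(Y)
=\int_{K'}\int_{K}e^{(\eta,\,k\mu(k'\lambda' k'^{\ast})k^{\ast})}\,dk\,dk'.
\]
Since $\mu:\mathfrak{s}'\to\mathfrak{s}$ is orthogonal projection and $k^{\ast}\eta k\in\mathfrak{s}$, one has $(\eta,k\mu(Z')k^{\ast})=(k^{\ast}\eta k,Z')$ for every $Z'\in\mathfrak{s}'$. Using $K<K'$, the change of variable $u=kk'\in K'$ (by left invariance of Haar measure on $K'$) collapses the $K\times K'$ integral to
\[
\int_{\mathfrak{s}}\psi_\eta(Y)\,\mu_{\ast}(d\mathfrak{x}')(Y)
=\int_{K'}e^{(\eta,\,u\lambda' u^{\ast})}\,du.
\]

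Next I would compute the pairing against $d\mu_{\lambda'}$. Apply the branching identity
\[
\phi_{n\lambda'}(u)=\sum_{\lambda\in L_+}m_{n\lambda'}(\lambda)\,\phi_\lambda(u),\qquad u\in U,
\]
at $u=\exp(\eta/n)$ (extended holomorphically from $U$ to $G_c$). By the Proposition applied to the pair $(\mathfrak{g}',\theta')$, the left-hand side converges to $\int_{K'}e^{(\eta,u\lambda' u^{\ast})}du$. For each summand on the right, the Harish-Chandra integral formula together with the fact, established in the proof of the Proposition, that the infinitesimal Iwasawa projection on $\mathfrak{s}$ agrees with the orthogonal projection onto $\mathfrak{a}$, yields
\[
\phi_\lambda\bigl(\exp(\eta/n)\bigr)=\psi_\eta(\lambda/n)+O(1/n)
\]
uniformly in $\lambda$ provided $\lambda/n$ stays in a fixed compact subset of $\mathfrak{a}_+$. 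The multiplicities $m_{n\lambda'}(\lambda)$ are supported where $\lambda/n$ lies in the moment polytope associated to $\lambda'$ by Theorem \ref{Thm: real reductive O'Shea--Sjamaar}, so, combined with the normalization $\sum_\lambda m_{n\lambda'}(\lambda)=\phi_{n\lambda'}(e)=1$, the uniform error is negligible and
\[
\lim_{n\to\infty}\sum_\lambda m_{n\lambda'}(\lambda)\,\psi_\eta(\lambda/n)
=\lim_{n\to\infty}\phi_{n\lambda'}\bigl(\exp(\eta/n)\bigr)
=\int_{K'}e^{(\eta,u\lambda' u^{\ast})}\,du,
\]
matching the first computation and identifying the two measures.

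The main obstacle is the uniform estimate $\phi_\lambda(\exp(\eta/n))=\psi_\eta(\lambda/n)+o(1)$ over $\lambda/n$ in a compact set, which is sharper than the pointwise convergence asserted by the Proposition and requires second-order control of the Iwasawa projection $A(\exp(\cdot))$ near the origin in $\mathfrak{s}$. A secondary input is the polytope support statement for the branching multiplicities $m_{n\lambda'}$, which itself follows from the real convexity theorem applied to the isospectral classes $\mathfrak{X}'_{n\lambda'}$.
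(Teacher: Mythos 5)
Your proposal is correct and follows essentially the same route as the paper's proof: the branching identity $\phi_{n\lambda'}=\sum_{\lambda}m_{n\lambda'}(\lambda)\phi_{\lambda}$ evaluated at $\exp(X/n)$, the flat scaling limit $\phi_{n\lambda}(\exp(X/n))\to\psi_{\lambda}(X)$ from the Proposition, and Euclidean Fourier inversion on $\mathfrak{s}$ --- your collapse of the double integral over $K\times K'$, identifying $\psi_{\lambda'}|_{\mathfrak{s}}$ with the transform of the pushforward, is exactly the paper's appeal to the Fubini theorem. The only substantive difference is that you spell out the uniform-in-$\lambda/n$ error estimate (via second-order control of the Iwasawa projection) and the compact-support/tightness input needed to interchange limit and sum, a point the paper's ``which in turn implies'' leaves implicit.
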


\begin{proof}
For $n\in\mathbb{N}$, $\lambda'\in L'_+$ and $X\in\mathfrak{s}$ we have
\begin{eqnarray*}
\phi_{n\lambda'}(\exp(X/n))=\sum_{\lambda\in L_+}\;m_{n\lambda'}(\lambda)\phi_{\lambda}(\exp(X/n)) \\
=\sum_{\lambda\in L_+/n}\;m_{n\lambda'}(n\lambda)\phi_{n\lambda}(\exp(X/n))\;,
\end{eqnarray*}
which in turn implies 
\[ \psi_{\lambda'}(X)=\int_{\mathfrak{a}_+}\;\psi_{\lambda}(X)d\mu_{\lambda'}(\lambda)  \]
for all $X\in\mathfrak{s}$. Hence the desired formula for $\mu_{\lambda'}$ as the 
pushforward measure $\mu_{\ast}(d\mathfrak{x}')$ follows from the Euclidean inversion
theorem for the flat space $\mathfrak{s}$ and the Fubini theorem.
\end{proof}

This theorem generalizes the result of \cite{Heckman 1982} on the relation between the
asymptotic behaviour of branching multiplicities and the pushforward of the Liouville
measure under the moment map in case $(\mathfrak{g},\theta)<(\mathfrak{g}',\theta')$
are both complex reductive Lie algebras with a Cartan involution. In that paper, the
convexity theorem was derived from the above theorem together with a 
simple representation-theoretic property. 

\subsection*{Some questions}

We end this section and the paper with some questions.
 
\begin{question}
For $\lambda\in L_+$ and $\lambda'\in L_+'$, does the spherical irreducible representation 
$V(\lambda)$ of $(\mathfrak{g},\theta)$ occur as subrepresentation of the 
spherical irreducible representation $V(\lambda')$ of $(\mathfrak{g}',\theta')$ if and 
only $m_{\lambda'}(\lambda)>0$? 
\end{question}

\begin{question}
Is it possible to generalize the 
results of this section to the general Hamiltonian setting, in line with O'Shea and Sjamaar?
\end{question}

\begin{question}
Is it possible to give an explicit description of the pushforward measure in the example of Chenciner?
\end{question}

\noindent
Gert Heckman, Radboud University Nijmegen: g.heckman@math.ru.nl
\newline
Lei Zhao, University of Groningen: l.zhao@rug.nl

\end{document}